\documentclass[11pt]{amsart}

\pdfoutput=1
\usepackage{amsmath, amsthm, amssymb,slashed,stmaryrd}
\usepackage{ifpdf}
\usepackage[pdftex]{graphicx}
\usepackage{tikz}
\usetikzlibrary{matrix,arrows,calc}
\usepackage{mathtools}
\usepackage[pdftex,plainpages=false,hypertexnames=false,pdfpagelabels]{hyperref}
\usepackage{tikz-cd}
\usetikzlibrary{arrows,decorations.markings}
\usetikzlibrary{shapes.geometric}

\usepackage{bm}
\usepackage{accents}
\usepackage[super]{nth}
\usepackage{comment}

 \theoremstyle{plain}

\setcounter{secnumdepth}{2}

\numberwithin{equation}{subsection}
 \newtheorem{thm}[subsection]{Theorem}

\newtheorem{lem}[subsection]{Lemma}
\newtheorem{prop}[subsection]{Proposition}
\newtheorem{cor}[subsection]{Corollary}
\theoremstyle{definition}

\newtheorem{rmk}[subsection]{Remark}
\newtheorem{question}[subsection]{Question}

\newtheorem{claim}[subsection]{Claim}
\newtheorem*{conjecture*}{Conjecture}
\newtheorem*{theorem*}{Theorem}
\newtheorem*{claim*}{Claim}
\newtheorem*{corollary*}{Corollary}

\def\beq{\begin{eqnarray}}
\def\eeq{\end{eqnarray}}
 \newcommand{\bp}{\begin{proof}[Proof]}
 \newcommand{\ep}{\end{proof}}

\DeclareSymbolFont{bbold}{U}{bbold}{m}{n}
\DeclareSymbolFontAlphabet{\mathbbold}{bbold}

\def\Hom{{\sf Hom}}

\def\Spec{{\rm Spec}}
\def\Spf{{\rm Spf}}

\def\Aut{{\rm Aut}}

\def\K3{{\rm K3}}

\def\Fil{{\rm Fil}}

\def\GL{{\rm GL}}

\newcommand{\defeq}{\vcentcolon=}

\makeatletter
\newcommand{\colim@}[2]{%
  \vtop{\m@th\ialign{##\cr
    \hfil$#1\operator@font colim$\hfil\cr
    \noalign{\nointerlineskip\kern1.5\ex@}#2\cr
    \noalign{\nointerlineskip\kern-\ex@}\cr}}%
}
\newcommand{\colim}{%
  \mathop{\mathpalette\colim@{}}\nmlimits@
}
\makeatother

\vfuzz4pt 
\hfuzz4pt 

\newcommand\nc{\newcommand}
\nc\mf\mathfrak
\nc\mc\mathcal
\nc\mb\mathbb
\nc\msf\mathsf

\usepackage[
backend=biber,
style=alphabetic,
sorting=nyt,
maxnames=50
]{biblatex}

\addbibresource{main.bib} 

\begin{document}

\title{Lifts of supersingular abelian varieties with small Mumford--Tate groups}

\author{Yeuk Hay Joshua Lam}
\author{Abhishek Oswal}

\date{\today}

\begin{abstract}  
We investigate to what extent an abelian variety over a finite field can be lifted to one in characteristic zero  with small Mumford-Tate group. We prove that supersingular abelian surfaces, respectively threefolds, can be lifted to ones isogenous to a square, respectively product, of elliptic curves. On the other hand, we show that supersingular abelian threefolds cannot be lifted to one isogenous to the cube of an elliptic curve over the Witt vectors. 
\end{abstract}

\maketitle 
\setcounter{tocdepth}{1}
\tableofcontents

\section{Introduction}
Given an abelian variety $A_0$ over  $\overline{\mb{F}}_p$, the question of what kind of lifts to characteristic zero $A_0$ admits has been studied for a long time. For example, the famous result of Honda shows that any such $A_0$ admits a lift to a complex multiplication (CM) abelian variety, \emph{up to isogeny}; more precisely, there exists an abelian variety $A_0'/\overline{\mb{F}}_p$ isogenous to $A_0$ which has a CM lift. A vast generalization of this result to a large class of  Shimura varieties was achieved by Kisin \cite{kisin}. On the other hand, the question of CM liftings on the nose, i.e. not replacing $A_0$ by something isogenous to it,  was considered by Oort \cite{oort92}, who gave examples of $A_0$ not admitting any CM lifts; throughout this paper, by a lift we will mean this stronger sense of lifting without allowing first an isogeny. In a recent work \cite{heckeorbit} of the first author with Kisin, Shankar, and Srinivasan, it was proven that, in fact, for a fixed $g$, only finitely many supersingular abelian varieties of dimension $g$ admit CM lifts. A similar result for the other Newton polygons, i.e. intermediate between ordinary and supersingular, as well as for K3 surfaces, was also obtained\footnote{the precise formulation is that the points admitting CM lifts lie on finitely many central leaves; we refer the reader to \cite{heckeorbit}[Theorem 1.3]}. We note that ordinary abelian varieties always admit CM lifts, namely the \emph{canonical lifts} over $W(\overline{\mb{F}}_p)$. For almost ordinary abelian varieties over $\overline{\mb{F}}_p$, in work of the second author with Shankar \cite{almost-ordinary}, there is a similar construction of CM lifts of such abelian varieties to `slightly ramified' extensions of the ring of Witt vectors $W(\overline{\mb{F}}_p)$: we refer the reader to \S 2 of loc.cit.\ for the precise statement.

Therefore it is, in a vague sense, supersingularity as well as the ramification of the lifting base that prevents an abelian variety from having CM lifts. We then have the following natural

\begin{question}\label{question:intro}
What is the smallest possible Mumford-Tate group of a lift of $A_0$? What about for lifts over $W(\overline{\mb{F}}_p)$?
\end{question}
As stated, Question~\ref{question:intro} is a little imprecise; as a first approximation, one can ask whether such an $A_0$ can be lifted so that some of its endomorphisms also lift, with the CM case being the best one can do and  not always possible. In this work we begin to investigate this question, and indicate that the answer may be rather subtle.
\begin{thm}\label{thm:main}\hfill
\begin{enumerate} 
\item 
Any supersingular abelian surface $A_0$ over $\overline{\mb{F}}_p$ admits a lift which is isogenous to the square of an elliptic curve.
\item 
Any supersingular abelian threefold $A_0$ over $\overline{\mb{F}}_p$  admits a lift which is isogenous to a product of three elliptic curves.
\item 
Not every supersingular abelian threefold admits a lift over $W(\overline{\mb{F}}_p)$ isogenous to the cube of an elliptic curve.
\end{enumerate}
\end{thm}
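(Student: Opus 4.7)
The plan splits naturally into the existence statements (1)--(2) and the obstruction (3). By a classical result of Oort, any supersingular $A_0$ of dimension $g$ is isogenous to $E_0^g$ for some supersingular elliptic curve $E_0/\overline{\mb{F}}_p$. A suggestive dimension count underlies the phrasing: the supersingular locus of $\mathcal{A}_g$ has dimension $\lfloor g^2/4 \rfloor$ by Li--Oort (so $1$ for $g=2$ and $2$ for $g=3$); the locus of $A$ isogenous to a product $E_1 \times \cdots \times E_g$ has dimension $g$; while the locus of $A$ isogenous to $E^g$ for a \emph{single} elliptic curve is only $1$-dimensional. Thus (1) is a balance ($1 = 1$), (2) has room ($3 \geq 2$), while (3) is $1 < 2$-short, which is the structural reason for the obstruction.

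\textbf{Parts (1) and (2).} Fix a $\overline{\mb{F}}_p$-isogeny $\psi_0 \colon E_0^g \to A_0$ and lift $E_0$ arbitrarily to an elliptic curve $\mathcal{E}$ over $W(\overline{\mb{F}}_p)$. The strategy is to lift the kernel $K_0 := \ker(\psi_0) \subset E_0^g$ to a finite flat subgroup scheme $\mathcal{K} \subset \mathcal{E}^g$ over a (possibly ramified) extension $R/W$; the quotient $\mathcal{E}_R^g / \mathcal{K}$ is then a lift of $A_0$ isogenous to $\mathcal{E}_R^g$. Since $K_0$ typically contains non-liftable pieces like $\alpha_p$, the required lift $\mathcal{K}$ exists only after sufficient ramification, using Raynaud's classification of finite flat group schemes of order $p$ and Breuil--Kisin for higher order. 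For part (1) this produces a lift isogenous to the square of $\mathcal{E}_R$; for part (2) a lift isogenous to the cube of $\mathcal{E}_R$, a special case of the product-of-three-elliptic-curves condition required. The technical crux in both is verifying the existence of an embedding $\mathcal{K} \hookrightarrow \mathcal{E}_R^g$ compatible with the polarization, which amounts to a Dieudonn\'e-module check; the ramification allowed in $R$ here is precisely what is disallowed in part (3).

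\textbf{Part (3).} I would construct an explicit supersingular threefold $A_0$ whose $p$-divisible group is as far from superspecial as possible (say $a$-number $1$), and derive a contradiction from an assumed lift $\mathcal{A}/W$ with $\mathcal{A} \sim \mathcal{E}^3$. Any isogeny $\mathcal{A} \to \mathcal{E}^3$ (or its dual) has kernel $\mathcal{K}$ a finite flat commutative group scheme over the \emph{unramified} ring $W$; by Fontaine's classification of such group schemes, $\mathcal{K}$ cannot contain $\alpha_p$ as a subquotient. The plan is a direct Dieudonn\'e-module computation: for the chosen $A_0$, every $\overline{\mb{F}}_p$-isogeny to $E_0^3$ (ranging over all supersingular $E_0$) is forced to have kernel containing an $\alpha_p$-subquotient, contradicting existence of $\mathcal{K}$. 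The main obstacle is uniformity --- one must rule out \emph{all} such isogenies for \emph{all} choices of supersingular $E_0/\overline{\mb{F}}_p$ and all intermediate factorizations. This should reduce to an explicit classification of the possible $\overline{\mb{F}}_p$-isogenies between $A_0[p^\infty]$ and $H_{1,1}^3$ together with a check that each produces a non-liftable kernel, the dimension gap $1 < 2$ serving as a heuristic for why such an $A_0$ must exist.
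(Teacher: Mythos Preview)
Your approach to parts (1)--(2) has two structural problems. First, you allow ramified base rings $R$, but the theorem (see the remark immediately following it in the paper) is about lifts over $W(\overline{\mb{F}}_p)$ itself; the paper constructs such unramified lifts by writing the Dieudonn\'e module in explicit upper-triangular form and solving, via Hensel's lemma, for an admissible filtration that is carried by an explicit isogeny into a product filtered Dieudonn\'e module. Second, and more seriously: for part (2) your method would yield a lift isogenous to the \emph{cube} $\mathcal{E}_R^3$ of a single elliptic curve, which is precisely what part (3) asserts cannot always be achieved over $W$. So over $W$ your strategy for (2) is doomed, and over ramified $R$ it proves a different (and weaker) statement. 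The paper's construction for (2) crucially places three \emph{different} admissible filtrations on $\mb{D}_0$ (equivalently, three different lifts $\mathcal{E}_1,\mathcal{E}_2,\mathcal{E}_3$ of $E_0$), and the parameters $a,b',i$ of these filtrations are solved for simultaneously with the filtration on $\mb{D}$. Fixing a single $\mathcal{E}$ in advance, as you do, discards exactly the freedom that makes the unramified construction possible.

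Your obstruction in part (3) does not work as stated. Every nontrivial isogeny between supersingular abelian varieties over $\overline{\mb{F}}_p$ has local-local kernel, so its composition factors are all $\alpha_p$; thus ``kernel contains $\alpha_p$ as a subquotient'' is automatic and cannot obstruct anything. Nor does Fontaine's classification forbid $\alpha_p$ from appearing in the special fibre of a finite flat group scheme over $W$: for any lift $\mathcal{E}/W$ of a supersingular $E_0$, the scheme $\mathcal{E}[p]$ is finite flat over $W$ with special fibre a nonsplit extension of $\alpha_p$ by $\alpha_p$. (What is true is that $\alpha_p$ alone does not lift to $W$, but that does not prevent larger kernels from lifting.) The paper's argument proceeds quite differently: it parametrizes supersingular Dieudonn\'e modules by triples $(x,y,z)\in W^3$, shows by a direct filtered-isogeny computation that the existence of a cube-lift forces $\overline{x},\overline{y}\in\overline{\mb{F}}_p$ to be linearly dependent over $\mb{F}_{p^2}$, and then runs a dimension count --- the locus of such $(\overline{x},\overline{y},\overline{z})$, even after propagating through the change-of-coordinates correspondence (controlled by a constructible set with finite fibres), has dimension at most $2<3$. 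In particular the paper does not exhibit one explicit $A_0$, but shows the obstruction holds generically.
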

In the language of Question~\ref{question:intro}, this means that abelian surfaces can always be lifted to ones with Mumford-Tate group contained in $\GL_2$, and for threefolds $(\GL_2)^3$. It would also be interesting to investigate other kinds of reduction in Mumford-Tate groups: for example, for the case of surfaces the other possibilities are that the lift $\mc{A}$ is isogenous to a product $E_1\times E_2$ where $E_1$ has CM, or that $\mc{A}$ has quaternionic multiplication.
\begin{rmk}
Our results concern only liftings to the Witt vectors $W(\overline{\mb{F}}_p)$. It would of course be interesting to understand whether the answer changes, in the cases we study, if we allow liftings to arbitrary discrete valuation rings.
\end{rmk}
\begin{rmk}
In light of Theorem~\ref{thm:main}, it seems reasonable to conjecture that, at least over $W(\overline{\mb{F}}_p)$, every supersingular abelian variety admits a lift isogenous to a product of elliptic curves, and that most supersingular abelian varieties do not admit lifts isogenous to the self power of an elliptic curve.
\end{rmk}
We now give an  idea of the proofs. By Grothendieck-Messing and Serre-Tate theory, to exhibit a lift of an abelian variety it suffices to give a filtration on the Dieudonn\'e module lifting the kernel of Frobenius modulo $p$. Therefore, for parts (1) and (2) of Theorem~\ref{thm:main}, it suffices to find such filtrations  such that the resulting filtered Dieudonn\' e module is isogenous to that of a product of elliptic curves. We achieve this   by a direct computation. For the third part of Theorem~\ref{thm:main}, we show that if an abelian threefold does admit  a lift isogenous to a product of elliptic curves, then there exist non-trivial algebraic relations between the parameters of its Dieudonn\'e module, which allows us to conclude. Crucial to our computations are explicit parametrizations of the supersingular loci for abelian surfaces and threefolds, which may be of independent interest.

\subsection{Acknowledgements} It is a pleasure to thank Ananth Shankar for asking us this question and to Ananth Shankar and Padma Srinivasan for several invaluable conversations. A.O. thanks Caltech, Pasadena and IAS, Princeton for excellent working conditions.

\subsection{Notations} Throughout, $p$ denotes a fixed odd prime number, $W \defeq W(\overline{\mb{F}}_p)$ denotes the ring of Witt vectors with coefficients in $\overline{\mb{F}}_p,$ $\sigma : W \rightarrow W$ the ring automorphism of $W$ that lifts the absolute Frobenius of $\overline{\mb{F}}_p.$ We denote by $v : W[1/p] \twoheadrightarrow \mb{Z}\cup \{\infty\},$ the (additive) discrete valuation on $W[1/p]$. 

\section{Reduction to a computation with Dieudonn\'e modules}
Let $A_0$ be a supersingular abelian variety over $\overline{\mb{F}}_p$ of dimension $g$. Let $n_1,\ldots,n_r$ be positive integers such that $\sum_{1\leq j \leq r} n_j = g$. The question of whether there exists an abelian scheme $\mathcal{A}$ over $\Spec(W)$ lifting $A_0$ and elliptic curves $\mathcal{E}_1,\ldots,\mathcal{E}_r$ over $\Spec(W)$ such that $\mathcal{A}$ is isogenous over $W$ to $\mathcal{E}_1^{n_1} \times \ldots \times \mathcal{E}_r^{n_r}$, reduces (via Grothendieck--Messing theory and the theorem of Serre--Tate) to an explicit computation with Dieudonn\'e modules. 

More precisely, let $(\mathbb{D},F,V)$ be the Dieudonn\'e module of $A_0[p^\infty]$. Let $E_0$ be a supersingular elliptic curve over $\overline{\mb{F}}_p$ and let $(\mathbb{D}_0,F_0,V_0)$ be the Dieudonn\'e module of $E_0[p^\infty]$. 

\begin{prop}\label{reduction-to-d-modules}
With notations as above, there is an abelian scheme $\mathcal{A}$ over $\Spec(W)$, lifting $A_0$ and elliptic curves $\mathcal{E}_1,\ldots,\mathcal{E}_r$ over $\Spec(W)$ such that $\mathcal{A}$ is isogenous to $\mathcal{E}_1^{n_1}\times \ldots \mathcal{E}_r^{n_r}$ over $W$ if and only if there exist filtrations $\Fil \subset \mathbb{D}$, and $\Fil_1,\ldots,\Fil_r \subset \mathbb{D}_0$ such that the following hold:
\begin{itemize}
    \item The quotient modules $\mathbb{D}/\Fil$ and $\mathbb{D}_0/\Fil_j$ for $1 \leq j \leq r$ are torsion-free over $W$.
    \item The image of the filtration $\Fil$ (respectively $\Fil_j$ for $1 \leq j \leq r$) in $\mathbb{D}/p\mathbb{D}$ (respectively $\mathbb{D}_0/p\mathbb{D}_0$) is the kernel of the Frobenius $F$ (respectively $F_0$) modulo $p$.
    \item There is an isogeny of filtered Dieudonn\'e modules $\varphi : (\mathbb{D},\Fil,F,V) \rightarrow (\mathbb{D}_0,\Fil_1,F_0,V_0)^{\oplus{n_1}}\bigoplus \ldots \bigoplus (\mathbb{D}_0,\Fil_r,F_0,V_0)^{\oplus{n_r}}$. 
\end{itemize}
\end{prop}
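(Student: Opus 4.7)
The plan is to combine three standard ingredients: the Serre--Tate deformation theorem, Grothendieck--Messing crystal theory, and the classical Dieudonn\'e equivalence over the perfect field $\overline{\mb{F}}_p$. Since each of these is an equivalence of categories in the appropriate sense, the proof amounts to assembling the dictionary and matching morphisms.

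First, by Serre--Tate, giving a lift $\mathcal{A}$ of $A_0$ (respectively $\mathcal{E}_j$ of $E_0$) to $\Spec(W)$ is equivalent to giving a lift of $A_0[p^\infty]$ (respectively of $E_0[p^\infty]$) to a $p$-divisible group over $W$, functorially in morphisms. Consequently, the existence of an isogeny $\mathcal{A} \to \mathcal{E}_1^{n_1} \times \cdots \times \mathcal{E}_r^{n_r}$ over $W$ is equivalent to the existence of the corresponding isogeny of $p$-divisible groups over $W$, and the problem is reduced to lifts of $p$-divisible groups together with isogenies between them.

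Next, apply Grothendieck--Messing to the PD thickening $\overline{\mb{F}}_p \hookleftarrow W$, equipped with the canonical divided powers on $pW$. This provides an equivalence between $p$-divisible groups over $W$ lifting a given $G_0$ over $\overline{\mb{F}}_p$ and filtrations $\Fil \subset \mathbb{D}(G_0)(W)$ which are $W$-module direct summands whose reduction modulo $p$ equals the Hodge filtration of $G_0$. Under the canonical identification of the Hodge filtration with the kernel of Frobenius modulo $p$, this captures the first two bullet points. Applied to $A_0[p^\infty]$ the equivalence produces $\Fil \subset \mathbb{D}$, and applied to $E_0[p^\infty]$ with $r$ independent choices it produces $\Fil_1,\ldots,\Fil_r \subset \mathbb{D}_0$.

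Finally, to handle the isogeny, the crucial point is that Grothendieck--Messing is an equivalence of full categories, not merely of groupoids: morphisms between lifts correspond bijectively to morphisms on the special fibre that preserve the relevant filtrations after evaluating the crystal at $W$. Given the filtered isogeny $\varphi$, its reduction modulo $p$ is a morphism of Dieudonn\'e modules over $\overline{\mb{F}}_p$, which by the classical Dieudonn\'e equivalence over the perfect field $\overline{\mb{F}}_p$ comes from a morphism $\psi : A_0[p^\infty] \to E_0[p^\infty]^{n_1}\oplus \cdots \oplus E_0[p^\infty]^{n_r}$. Since $\varphi$ preserves the chosen filtrations by hypothesis, Grothendieck--Messing lifts $\psi$ to a morphism of $p$-divisible groups over $W$; this lift is an isogeny precisely because $\varphi$ becomes an isomorphism after inverting $p$. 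The reverse direction runs the same equivalences backwards. The only real subtlety is keeping this functoriality of Grothendieck--Messing in view, which obviates any need for deeper input such as Tate's theorem on homomorphisms of $p$-divisible groups over $W$.
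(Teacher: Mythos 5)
There is a genuine gap, and it sits exactly at the point you wave away at the end. Your argument produces, from the filtered isogeny $\varphi$, a morphism $\psi : A_0[p^\infty] \to E_0[p^\infty]^{n_1}\oplus\cdots\oplus E_0[p^\infty]^{n_r}$ of $p$-divisible groups over $\overline{\mb{F}}_p$, and then a lift of it to a morphism of $p$-divisible groups over $W$ via Grothendieck--Messing. But the Serre--Tate theorem does not let you conclude from this an isogeny of (formal) abelian schemes: it identifies morphisms of abelian schemes over $W/p^n$ with pairs consisting of a morphism of abelian varieties over $\overline{\mb{F}}_p$ together with a compatible morphism of the lifted $p$-divisible groups. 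So you need the special fibre of your map to come from an actual isogeny of abelian varieties $A_0 \to E_0^g$, and $\psi$ is only a map of $p$-divisible groups. This is a real obstruction, not a bookkeeping issue: for a supersingular elliptic curve, $\End(E_0[p^\infty])$ is the maximal order in the nonsplit quaternion algebra over $\mb{Q}_p$ (an uncountable ring), while $\Hom$'s of abelian varieties form finitely generated $\mb{Z}$-modules, so a general isogeny of $p$-divisible groups over $\overline{\mb{F}}_p$ is not of the form $\gamma[p^\infty]$ for any isogeny $\gamma$ of abelian varieties. Your closing sentence, that functoriality of Grothendieck--Messing ``obviates any need for deeper input such as Tate's theorem,'' is therefore precisely backwards: the paper's proof needs Tate's isogeny theorem over finite fields at this point. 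It first checks that (a suitable dual of) $\varphi$ is defined over a finite field, then $p$-adically approximates it by genuine isogenies of abelian varieties, and corrects the discrepancy by an automorphism $\alpha$ of $A_0[p^\infty]$, so that $\varphi\circ\alpha = \gamma[p^\infty]$ for an honest isogeny $\gamma : A_0 \to E_0^g$; one then replaces $\Fil$ by $\alpha^{-1}(\Fil)$ before applying Grothendieck--Messing and Serre--Tate. Without some such approximation-and-correction step your construction only yields an isogeny of $p$-divisible groups over $W$, not of abelian schemes.

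A second, smaller omission: Serre--Tate (applied over the thickenings $W/p^n$) only produces formal abelian schemes over $\Spf(W)$, whereas the statement asks for abelian schemes over $\Spec(W)$. The elliptic-curve lifts algebraize automatically, but for the lift of $A_0$ you must algebraize, e.g.\ by pulling back a polarization along the lifted isogeny and invoking Grothendieck's algebraization theorem, as the paper does; your proposal treats the lifts as living over $\Spec(W)$ from the start and never addresses this.
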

\begin{proof}
    It is clear that if there is an abelian scheme $\mathcal{A}$ and elliptic curves  $\mathcal{E}_1,\ldots,\mathcal{E}_r$ as in the statement of the proposition, then there do exist filtrations and an isogeny $\varphi$ as above. 
    
    Conversely, suppose that there exist filtrations and an isogeny $\varphi$ as above. The isogeny $\varphi$ corresponds to an isogeny, again denoted by $\varphi$, of $p$-divisible groups $A_0[p^\infty]\xrightarrow{\varphi} E_0^g[p^\infty]$ over $\overline{\mb{F}}_p$.
    We first prove the following lemma.
    \begin{lem}
    There exists an automorphism $\alpha$ of the $p$-divisible group $A_0[p^\infty]$ and an isogeny $\gamma : A_0 \rightarrow E_0^g$ over $\overline{\mb{F}}_p$ such that $\varphi \circ \alpha = \gamma[p^\infty]$.   
    \end{lem}
    \begin{proof}[Proof of Lemma]
        Consider the dual isogeny $\varphi^\vee : E_0^g[p^\infty]\rightarrow A_0[p^\infty]^\vee$. First we note that $\varphi^\vee$ is defined over a finite field. Indeed, since $A_0$ is supersingular, we may pick some isogeny $\beta: A_0^{\vee}\rightarrow E_0^g$. Then $\beta \circ \varphi^{\vee}$ is a self isogeny of $E_0^g[p^{\infty}]$, i.e. an element of $M_g(D)$ \footnote{here $M_g$ denotes the algebra of $g\times g$ matrices} with $D$ being the non-split quaternion algebra over $\mb{Q}_p$, and all of the elements of this algebra are defined over a finite field. Therefore $\varphi^{\vee}$ is also defined over a finite field, as claimed.
        
        Now by Tate's isogeny theorem over finite fields (see for instance \cite[Theorem A.1.1.1]{conrad-chai-oort}), there is a sequence of elements $\gamma_i^\vee \in \Hom_{\mathrm{Ab.Var}/\overline{\mb{F}}_p}(E_0^g,A_0^\vee),$ such that $\gamma_i^\vee[p^\infty]$ converges to $\varphi^\vee$ in the $p$-adic topology. For $i$ sufficiently large, $\ker(\gamma_i^\vee[p^\infty]) = \ker(\varphi^\vee),$ and thus there is an automorphism $\alpha^\vee \in \Aut_{p\text{-div.gr}/\overline{\mb{F}}_p}(A_0[p^\infty]^\vee)$ such that $\alpha^\vee \circ \varphi^\vee = \gamma_i^\vee[p^\infty].$
    \end{proof}
    
The automorphism $\alpha$ of the $p$-divisible group $A_0[p^\infty]$ induces an automorphism of the Dieudonn\'e module $(\mathbb{D},F,V)$, again denoted by $\alpha$. Let $\Fil_0 := \alpha^{-1}(\Fil).$ The image of $\Fil_0$ in $\mathbb{D}/p\mathbb{D}$ is the kernel of $F$ modulo $p$. By Grothendieck--Messing theory\cite[Ch.\,V, Theorem 1.6]{messing1972crystals} and the theorem of Serre--Tate \cite[Theorem 1.2.1]{katz1981serre}, the isogeny of filtered Dieudonn\'e modules, \[\varphi \circ \alpha : (\mathbb{D},\Fil_0,F,V) \rightarrow (\mathbb{D}_0,\Fil_1,F_0,V_0)^{\oplus{n_1}} \bigoplus \ldots \bigoplus (\mathbb{D}_0,\Fil_r,F_0,V_0)^{\oplus{n_r}},\] gives rise to formal abelian schemes $\mathfrak{A}$ and $\mathfrak{E}_1,\ldots, \mathfrak{E}_r$ over $\Spf(W)$ lifting $A_0$ and $E_0$ respectively, and an isogeny $\widetilde{\gamma} : \mathfrak{A}  \rightarrow \mathfrak{E}_1^{n_1} \times \ldots \times \mathfrak{E}_r^{n_r}$ lifting $\gamma : A_0  \rightarrow E_0^g.$ The formal abelian schemes $\mathfrak{E}_i$ are all algebraizable, that is there are elliptic curves $\mathcal{E}_i$ over $\Spec(W)$ whose $p$-adic completion is $\mathfrak{E}_i$. The isogeny $\widetilde{\gamma}$ induces a polarization on $\mathfrak{A}$, and thus $\mathfrak{A}$ and the isogeny $\widetilde{\gamma}$ arise as the $p$-adic completion of an isogeny $\mathcal{A} \rightarrow \mathcal{E}_1^{n_1} \times \ldots \times \mathcal{E}_r^{n_r} $ over $\Spec(W)$ for an abelian scheme $\mathcal{A}$ over $\Spec(W)$ lifing $A_0$. This completes the proof of the proposition. 
\end{proof}

\section{Explicit Frobenius and Verschiebung}
\subsection{Abelian surfaces}

Let $E_0$ be a supersingular elliptic curve over $\overline{\mb{F}}_p$ and let  $(\mathbb{D}_0,F_0,V_0)$ denote the Dieudonn\'e module of $E_0[p^\infty].$ Then there is a basis $\{e,f\}$ of $\mathbb{D}_0$ with respect to which $F_0 = \begin{pmatrix}
0 & p \\
1 & 0
\end{pmatrix} = V_0.$ Let $\Fil_0 \subset \mathbb{D}_0$ be an admissible filtration, i.e. a filtration lifting the kernel of Frobenius modulo $p$. We record the following lemma whose proof is straightforward and left to the reader.
\begin{lem}\label{lem:sublattices-of-D0}
For every integer $n \geq 0$, there is a unique $W$-submodule $L(n) \subset \mathbb{D}_0$ of rank $2$, stable under $F_0$ and $V_0$, and such that the torsion $W$-module $\mathbb{D}_0/L(n)$ has length $n$. If $n = 2r$ is even, then $L(n) = \langle p^re, p^r f\rangle$. If $n=2r+1$ is odd, then $L(n) = \langle p^{r+1}e, p^r f \rangle$. 
Furthermore, $\Fil \cap L(n)$ is an admissible filtration of $\left(L(n),F_0\vert_{L(n)},V_0\vert_{L(n)}\right)$ if and only if $n$ is even.
\end{lem}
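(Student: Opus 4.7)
My plan is to proceed by induction on $n$. The crucial observation is that $L(1) = \langle pe, f\rangle$ is itself isomorphic to $\mathbb{D}_0$ as a Dieudonn\'e module, via the semilinear map $\psi : \mathbb{D}_0 \xrightarrow{\sim} L(1)$ defined by $\psi(e) = f$ and $\psi(f) = pe$; direct verification shows $\psi$ intertwines $F_0$ and $V_0$. This reduces the classification inside $L(1)$ to the same problem with smaller $n$.

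For existence, I would check by direct computation that the explicit lattices $L(n)$ in the statement are $F_0$- and $V_0$-stable. For uniqueness, given any $F_0$-stable rank-$2$ sublattice $L \subseteq \mathbb{D}_0$ of colength $n \geq 1$, I would note that $(L + p\mathbb{D}_0)/p\mathbb{D}_0$ must be an $F_0 \bmod p$-stable subspace of $\mathbb{D}_0/p\mathbb{D}_0$. Since $F_0 \bmod p$ sends $e \mapsto f$ and $f \mapsto 0$, a short semilinearity argument shows the only nonzero proper such subspace is $\langle \bar{f}\rangle$. Thus $L + p\mathbb{D}_0$ equals $\mathbb{D}_0$, $L(1)$, or $p\mathbb{D}_0$. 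The first forces $L = \mathbb{D}_0$ by Nakayama, excluded since $n \geq 1$. In the second case, $L \subseteq L(1)$ has colength $n-1$ inside $L(1)$; applying $\psi^{-1}$ and the inductive hypothesis yields $L = \psi(L(n-1))$, and a short formula check confirms $\psi(L(n-1)) = L(n)$. In the third case, $L \subseteq p\mathbb{D}_0$, so $L = pM$ for an $F_0$-stable rank-$2$ sublattice $M$ of colength $n-2$; by induction $M = L(n-2)$, hence $L = pL(n-2) = L(n)$.

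For the admissibility statement, I would first note that every admissible filtration of $\mathbb{D}_0$ has the form $\Fil = Wv$ with $v = f + pw$ for some $w = w_1 e + w_2 f \in \mathbb{D}_0$. For $n = 2r$ even, $L(n) = p^r \mathbb{D}_0$ and $\Fil \cap L(n) = Wp^r v$; its image modulo $pL(n) = p^{r+1}\mathbb{D}_0$ reduces to $\overline{p^r f}$, which coincides with $\ker(F_0 \bmod pL(n))$ (computed directly from the basis $\{p^r e, p^r f\}$ of $L(n)$), so the filtration is admissible. For $n = 2r+1$ odd, $L(n) = \langle p^{r+1}e, p^r f\rangle$, and $p^r v$ expands in the basis $\{p^{r+1}e, p^r f\}$ of $L(n)$ as $w_1\cdot (p^{r+1}e) + (1+pw_2)\cdot(p^r f)$; its image in $L(n)/pL(n)$ has a unit coefficient along $\overline{p^r f}$, whereas $\ker(F_0 \bmod pL(n)) = \langle \overline{p^{r+1}e}\rangle$, a distinct line, so admissibility fails. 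The only real technical subtlety is correctly verifying the Dieudonn\'e-module isomorphism $\psi$, respecting semi-linearity; everything else is bookkeeping and routine mod-$p$ computations.
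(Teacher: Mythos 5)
Your proof is correct; the paper simply states that the proof is ``straightforward and left to the reader,'' so there is no argument in the paper to compare against, and your write-up is a perfectly good way to make the omitted verification precise. Two small remarks: your map $\psi$ is nothing but $F_0$ itself, viewed as a $\sigma$-semilinear bijection of $\mathbb{D}_0$ onto $L(1)$ (it automatically commutes with $F_0$ and $V_0$ since $F_0V_0=V_0F_0=p$), which makes the key verification immediate; and in the odd case you should record that $\Fil\cap L(2r+1)$ is exactly $W\,p^r v$ (this holds because the coefficient $1+pw_2$ of $f$ in $v$ is a unit, so no smaller power of $p$ times $v$ lands in $L(2r+1)$), since the non-admissibility argument is about the image of that intersection. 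With these points made explicit, the induction, the Nakayama step, and the mod-$p$ computations all check out.
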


In this subsection, we let $(\mathbb{D}',F',V') \defeq (\mathbb{D}_0,F_0,V_0)^{\oplus{2}}$, and let $e_1' \defeq (e,0), f_1' \defeq (f,0), e_2'\defeq (0,e),f_2'\defeq (0,f)$. Let us also denote by $\mathrm{pr}_1$ (respectively $\mathrm{pr}_2$) the projection $\mathbb{D}'\rightarrow \mathbb{D}_0$ to the first (respectively second) coordinate. 

\begin{prop}\label{prop:explicit-isogeny-2d}
Suppose $\varphi : (\mathbb{D},F,V) \rightarrow (\mathbb{D}',F',V')$ is an isogeny of Dieudonn\'e modules. Then there is a $W$-basis $\{e_1,f_1,e_2,f_2\}$ of $\mathbb{D}$ with respect to which the actions of the Frobenius and Verschiebung are given by
    \begin{align}\label{eq:F,V,2d}
       F=\begin{pmatrix}
0 & p & 0 & 0 \\
1 & 0 & 0 & x \\
  &   &  0 & p\\
  &   &   1 & 0
\end{pmatrix}, \ \ V =\begin{pmatrix}
0 & p & -\sigma^{-1}(x) & 0 \\
1 & 0 & 0 & 0 \\
  &   &  0 & p\\
  &   &   1 & 0
\end{pmatrix}. 
    \end{align}  for some $x,y \in W$ and such that $\varphi(\langle e_1,f_1\rangle) \subseteq \langle e_1',f_1'\rangle$ and  $\varphi(\langle e_1,f_1, e_2,f_2\rangle) \subseteq \langle e_1',f_1',e_2',f_2'\rangle$. 

Furthermore, if there exist admissible filtrations $\Fil \subset \mathbb{D}$ and $\Fil_1, \Fil_2 \subset \mathbb{D}_0$ such that $\varphi(\Fil) \subseteq \Fil_1\oplus \Fil_2$, then we may additionally insist that \begin{itemize}
    \item $\varphi(e_1) = p^r e_1'$ and $\varphi(f_1) = p^r f_1'$ for some $r \geq 0$, 
    \item $\varphi(e_2) \in p^s e_2' + \langle e_1', f_1' \rangle$ and $\varphi(f_2) \in p^s f_2' + \langle e_1', f_1' \rangle$, for some $s \geq 0$. 
\end{itemize} 
\end{prop}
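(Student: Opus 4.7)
The plan is to build the basis $\{e_1,f_1,e_2,f_2\}$ in two stages: first on a canonical rank-two sub-Dieudonn\'e module $\mathbb{D}_1 \subseteq \mathbb{D}$ on which $\varphi$ lands in the first copy of $\mathbb{D}_0$, and then by lifting a basis of the quotient $\mathbb{D}/\mathbb{D}_1$ in a way that forces Frobenius and Verschiebung into the prescribed block-triangular form.

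For the first stage I would set $\mathbb{D}_1 \defeq \varphi^{-1}\bigl(\langle e_1',f_1'\rangle[1/p]\bigr) \cap \mathbb{D}$. This is an $F,V$-stable $W$-sublattice of rank two, tautologically satisfying $\varphi(\mathbb{D}_1) \subseteq \langle e_1',f_1'\rangle$; the quotient $\mathbb{D}/\mathbb{D}_1$ is torsion-free since it embeds in $\mathbb{D}_0$ via $\mathrm{pr}_2\circ\varphi$. Lemma~\ref{lem:sublattices-of-D0} identifies $\varphi(\mathbb{D}_1)$ and $\mathrm{pr}_2(\varphi(\mathbb{D}))$ with $L(n)$ and $L(m)$ for some $n,m \geq 0$. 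Each $L(k)$ is abstractly isomorphic as a Dieudonn\'e module to $\mathbb{D}_0$: rescaling its natural generators produces a basis in which $F$ and $V$ both act by $\bigl(\begin{smallmatrix}0&p\\1&0\end{smallmatrix}\bigr)$. I would pick such bases $\{e_1,f_1\}$ of $\mathbb{D}_1$ and $\{\bar e_2,\bar f_2\}$ of $\mathbb{D}/\mathbb{D}_1$.

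For the second stage, fix any lift $e_2 \in \mathbb{D}$ of $\bar e_2$ and set $f_2 \defeq F(e_2)$, so that $\{e_1,f_1,e_2,f_2\}$ is a $W$-basis of $\mathbb{D}$ with $F(e_2)=f_2$ on the nose. Because $F^2 = p$ on the quotient Dieudonn\'e module, $F(f_2)$ and $V(e_2)$ differ from $pe_2$ and $f_2$ respectively by elements of $\mathbb{D}_1$; write $V(e_2) = f_2 + \mu e_1 + \nu f_1$. The only remaining freedom is to replace $e_2$ by $e_2 + ce_1$ for $c \in W$, which replaces $f_2$ by $f_2 + \sigma(c) f_1$ and alters the $f_1$-coefficient of $V(e_2) - f_2$ by $\sigma^{-1}(c) - \sigma(c)$. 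Solving $\sigma^2(c') - c' = \nu$ for $c' = \sigma^{-1}(c) \in W$ is possible by the surjectivity of $1 - \sigma^2$ on $W(\overline{\mb{F}}_p)$ (a Lang-type statement: reduce mod $p$, solve by algebraic closedness, then Hensel-lift). With $\nu = 0$, setting $x \defeq -\sigma(\mu)$ gives $V(e_2) = f_2 - \sigma^{-1}(x) e_1$, and the relation $FV = p$ then forces $F(f_2) = pe_2 + xf_1$, recovering both matrices in \eqref{eq:F,V,2d}.

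For the filtered refinement, the hypothesis that $\varphi(\Fil) \subseteq \Fil_1 \oplus \Fil_2$, combined with admissibility of $\Fil$, descends to admissible filtrations on both $\mathbb{D}_1$ and $\mathbb{D}/\mathbb{D}_1$; transporting these along $\varphi$ gives admissible filtrations on $L(n)$ and $L(m)$ contained in $\Fil_1$ and $\Fil_2$ respectively. Lemma~\ref{lem:sublattices-of-D0} then forces $n=2r$ and $m=2s$ to be even, whence $L(2r)=\langle p^r e_1',p^r f_1'\rangle$ and $L(2s)=\langle p^s e_2',p^s f_2'\rangle$; the basis choices from the first stage can then be arranged so that $\varphi(e_1)=p^re_1'$, $\varphi(f_1)=p^rf_1'$, while $\varphi(e_2)$ and $\varphi(f_2)$ lie in the claimed cosets. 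I expect the hardest parts to be the Lang-type surjectivity used in the second stage and, in the filtered case, the verification that admissibility really does restrict cleanly to the sub- and quotient Dieudonn\'e modules; this descent of admissibility is the main technical point I anticipate.
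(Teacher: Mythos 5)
Your proposal is correct and follows essentially the same route as the paper: the canonical sublattice $\varphi^{-1}(\langle e_1',f_1'\rangle)$, Lemma~\ref{lem:sublattices-of-D0} (including its admissibility clause to force even colength in the filtered case), and the surjectivity of $\sigma^2-1$ on $W$ to normalize the extension parameter are exactly the paper's ingredients, with your sub/quotient packaging and the choice $f_2=F(e_2)$ merely streamlining the paper's explicit completion of the basis and change of variables. The descent-of-admissibility point you flag does go through, and is precisely the paper's observation that $\varphi(\Fil)\cap\langle e_1',f_1'\rangle=\Fil_1\cap\varphi(\mathbb{D})$ (using that $\Fil$ is saturated of the same rank as $\Fil_1\oplus\Fil_2$), together with the analogous statement for $\mathrm{pr}_2\circ\varphi$.
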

\begin{proof}
By Lemma~\ref{lem:sublattices-of-D0}, $\varphi(\mathbb{D})\cap \langle e_1', f_1' \rangle$ is either equal to $\langle p^{r+1}e_1', p^rf'\rangle$ or $\langle p^r e_1', p^r f_1'\rangle$ for some $r \geq 0$. If there furthermore exist filtrations $\Fil$ and $\Fil_1, \Fil_2$ as above, then this would imply that $\varphi(\Fil)\cap\langle e_1', f_1' \rangle = \Fil_1 \cap \varphi(\mathbb{D})$ and thus that $\Fil_1 \cap \varphi(\mathbb{D})$ is an admissible filtration of $\varphi(\mathbb{D}) \cap \langle e_1', f_1'\rangle$. Therefore, by Lemma \ref{lem:sublattices-of-D0} again,  $\varphi(\mathbb{D})\cap \langle e_1', f_1' \rangle$ must equal $\langle p^r e_1', p^r f_1'\rangle$ for some $r \geq 0$. 

Thus, we may pick a basis $\{e_1,f_1\}$ of $\varphi^{-1}(\langle e_1', f_1'\rangle)$ such that $F(e_1) = f_1$ and $F(f_1) = pe_1$, and in the case that there exist filtrations $\Fil$ and $\Fil_1,\Fil_2$ as above, we may also insist that $\varphi(e_1) = p^r e_1'$ and $\varphi(f_1) = p^rf_1'$. 

By Lemma \ref{lem:sublattices-of-D0}, $\mathrm{pr}_2 \circ\varphi(\mathbb{D})$ is either equal to $\langle p^{s+1}e_2', p^s f_2'\rangle$ or $\langle p^s e_2', p^s f_2' \rangle$ for some $s \geq 0$, and furthemore if there exist filtrations as above, then only the second of the above two cases can occur. Therefore, we may complete $\{e_1,f_1\}$ to a basis $\{e_1,f_1,e_2'',f_2''\}$ of $\mathbb{D}$ such that the Frobenius $F$ in this basis is of the form \[F=\begin{pmatrix}
0 & p & a & b \\
1 & 0 & c & d \\
  &   &  0 & p\\
  &   &   1 & 0
\end{pmatrix}\] for some $a, b, c, d \in W$. We further note that the condition that $F \circ V = p$ implies that $b \in pW$. In the case that there are filtrations as in the statement of the proposition, we may additionally insist that $\varphi(e_2'') = p^s e_2'$ and $\varphi(f_2'') = p^s f_2'$ for some $s \geq 0$.

Pick a $\lambda \in W$, such that $\lambda - \sigma^2(\lambda) = \sigma(c)+ b/p$. 
Set $e_2 \defeq e_2'' + \lambda e_1 $ and $f_2 \defeq f_2'' + a e_1 + \left( \sigma(\lambda)+c\right)f_1.$ Then one may check that the basis $\{e_1,f_1,e_2,f_2\}$ of $\mathbb{D}$, satisfies the requirements of the proposition, with $x=d+\sigma(a)$.
\end{proof}

Since every supersingular abelian surface over $\overline{\mb{F}}_p$ is isogenous to $E_0\times E_0$, we obtain the following corollary.
\begin{cor}\label{cor:dieudonne-module-surface}
Let $A_0$ be a supersingular abelian surface over $\overline{\mb{F}}_p$. Then there exists a basis of the Dieudonn\'e module of $A_0[p^\infty]$ with respect to which the actions of Frobenius and Verschiebung are given as in \ref{eq:F,V,2d} of Proposition \ref{prop:explicit-isogeny-2d} above for some $x \in W$.
\end{cor}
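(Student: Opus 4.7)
The plan is to derive this as an immediate consequence of the first part of Proposition~\ref{prop:explicit-isogeny-2d}, which produces the desired normal form of $F$ and $V$ given only an isogeny of Dieudonn\'e modules $(\mathbb{D},F,V) \to (\mathbb{D}',F',V') = (\mathbb{D}_0,F_0,V_0)^{\oplus 2}$, with no hypothesis on admissible filtrations.

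First I would invoke the classical structure theorem for supersingular abelian varieties over $\overline{\mathbb{F}}_p$ (due to Deligne, Shioda, and Oort): any supersingular abelian surface $A_0$ is isogenous over $\overline{\mathbb{F}}_p$ to $E_0 \times E_0$, where $E_0$ is a fixed supersingular elliptic curve. Indeed, all supersingular elliptic curves over $\overline{\mathbb{F}}_p$ are isogenous, and the Newton polygon of $A_0$ has pure slope $1/2$, so its $p$-divisible group is isogenous to $E_0[p^\infty]^{\oplus 2}$; combined with Tate's isogeny theorem (applied after descending to a finite field over which everything is defined), this promotes to an isogeny of the abelian varieties themselves.

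Choosing such an isogeny $A_0 \to E_0^2$ and passing to contravariant Dieudonn\'e modules yields an isogeny $\varphi : (\mathbb{D},F,V) \to (\mathbb{D}_0,F_0,V_0)^{\oplus 2} = (\mathbb{D}',F',V')$. The first half of Proposition~\ref{prop:explicit-isogeny-2d}, whose conclusion does not require the existence of compatible admissible filtrations, then supplies a $W$-basis $\{e_1,f_1,e_2,f_2\}$ of $\mathbb{D}$ in which $F$ and $V$ are given by the matrices \eqref{eq:F,V,2d} for some $x \in W$. This is precisely the content of the corollary; no further obstacles arise.
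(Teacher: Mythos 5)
Your argument is correct and is essentially the paper's own proof: the authors also deduce the corollary directly from the fact that every supersingular abelian surface over $\overline{\mb{F}}_p$ is isogenous to $E_0\times E_0$, and then apply the first (filtration-free) part of Proposition~\ref{prop:explicit-isogeny-2d} to the induced isogeny of Dieudonn\'e modules. Your extra justification of the isogeny $A_0\to E_0^2$ via the slope-$1/2$ Newton polygon and Tate's theorem is a correct elaboration of what the paper takes as known.
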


\subsection{Abelian threefolds}\label{sec:lifitng-threefolds}

In this subsection we let \[(\mathbb{D}',F',V') \defeq (\mathbb{D}_0,F_0,V_0)^{\oplus{3}},\] and we set $e_1' \defeq (e,0,0), f_1' \defeq (f,0,0), e_2'\defeq (0,e,0), f_2'\defeq (0,f,0), e_3'\defeq (0,0,e)$ and $f_3'\defeq(0,0,f)$. 

We now record the following version of Proposition~\ref{prop:explicit-isogeny-2d} for supersingular abelian threefolds, the  proof of which being similar to that of Proposition \ref{prop:explicit-isogeny-2d}, is omitted.

\begin{prop}\label{prop:explicit-isogeny}
Suppose $\varphi : (\mathbb{D},F,V) \rightarrow (\mathbb{D}',F',V')$ is an isogeny of Dieudonn\'e modules. Then there is a $W$-basis $\{e_1,f_1,e_2,f_2,e_3,f_3\}$ of $\mathbb{D}$ with respect to which the actions of the Frobenius and Verschiebung are given by
    \begin{align}\label{eq:F,V,3d}
        F=\begin{pmatrix}
0 & p & 0  & 0  & 0  & 0 \\
1 & 0 & 0  & x &  0 &  z \\
  &   & 0 & p &  0 & 0  \\
  &   & 1 & 0 & 0  & y \\
  &   &   &   & 0 & p \\
  &   &   &   & 1 & 0\\
\end{pmatrix}, \ \ V =\begin{pmatrix}
0 & p &  -\sigma^{-1}(x) & 0  & -\sigma^{-1}(z) & 0 \\
1 & 0 &  0 & 0  & 0  & 0  \\
  &   & 0 & p & -\sigma^{-1}(y) & 0   \\
  &   & 1 & 0 &  0 & 0  \\
  &   &   &   & 0 & p \\
  &   &   &   & 1 & 0\\
\end{pmatrix}. 
    \end{align}  for some $x,y, z \in W$ and such that $\varphi(\langle e_1,f_1\rangle) \subseteq \langle e_1',f_1'\rangle$, $\varphi(\langle e_1,f_1, e_2,f_2\rangle) \subseteq \langle e_1',f_1',e_2',f_2'\rangle$ and $\varphi(\langle e_1,f_1,e_2,f_2,e_3,f_3\rangle) \subseteq \langle e_1',f_1',e_2',f_2',e_3',f_3'\rangle$. 

Furthermore, if there exist admissible filtrations $\Fil \subset \mathbb{D}$ and $\Fil_1, \Fil_2$, $\Fil_3 \subset \mathbb{D}_0$ such that $\varphi(\Fil) \subseteq \Fil_1\oplus \Fil_2 \oplus \Fil_3$, then we may additionally insist that \begin{itemize}
    \item $\varphi(e_1) = p^r e_1'$ and $\varphi(f_1) = p^r f_1'$ for some $r \geq 0$, 
    \item $\varphi(e_2) \in p^s e_2' + \langle e_1', f_1' \rangle$ and $\varphi(f_2) \in p^s f_2' + \langle e_1', f_1' \rangle$, for some $s \geq 0$, 
    \item $\varphi(e_3) \in p^t e_3' + \langle e_1', f_1', e_2' , f_2' \rangle$ and $\varphi(f_3) \in p^t f_3' + \langle e_1', f_1', e_2', f_2' \rangle$ for some $t \geq 0$.
\end{itemize} 
\end{prop}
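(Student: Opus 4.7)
The plan is to mimic Proposition~\ref{prop:explicit-isogeny-2d} by inducting along the flag $\langle e_1', f_1'\rangle \subset \langle e_1', f_1', e_2', f_2'\rangle \subset \mathbb{D}'$. First, I would apply Lemma~\ref{lem:sublattices-of-D0} to $\varphi(\mathbb{D}) \cap \langle e_1', f_1'\rangle$, a rank-$2$, $F_0$- and $V_0$-stable $W$-sublattice of $\langle e_1', f_1'\rangle \cong \mathbb{D}_0$. Exactly as in the $2$d proof, the presence of admissible filtrations forces this intersection to be the even-length lattice $\langle p^r e_1', p^r f_1'\rangle$ for some $r \geq 0$, and one picks a Frobenius-compatible basis $\{e_1, f_1\}$ of $\varphi^{-1}(\langle e_1', f_1'\rangle)$ with $F(e_1) = f_1$, $F(f_1) = pe_1$, and with $\varphi(e_1) = p^r e_1'$, $\varphi(f_1) = p^r f_1'$ in the filtered setting.

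For the inductive step, I would apply the same analysis to the induced isogeny $\overline{\varphi} : \mathbb{D}/\langle e_1, f_1\rangle \to \mathbb{D}_0^{\oplus 2}$ landing in $\langle e_2', f_2', e_3', f_3'\rangle / \langle e_1', f_1' \rangle$; working first with the $\langle e_2', f_2'\rangle$ factor produces lifts $e_2'', f_2'' \in \mathbb{D}$, and iterating once more within the quotient by $\langle e_1, f_1, e_2'', f_2''\rangle$ produces $e_3'', f_3''$. By construction $\varphi$ sends the basis $\{e_1, f_1, e_2'', f_2'', e_3'', f_3''\}$ into the required lattices, and $F$ takes a block upper-triangular form with each $2 \times 2$ diagonal block equal to $\bigl(\begin{smallmatrix} 0 & p \\ 1 & 0 \end{smallmatrix}\bigr)$ and a priori arbitrary $W$-entries in the three strictly upper $2 \times 2$ blocks. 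The relation $FV = p$ then forces each such off-diagonal block to have zero top-left entry and top-right entry in $pW$, mirroring the constraint $b \in pW$ in Proposition~\ref{prop:explicit-isogeny-2d}.

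The final step is normalization. Substitutions $e_i^{\text{new}} = e_i'' + (\text{a }W\text{-combination of earlier basis vectors})$, and similarly for $f_i$, chosen to solve $\sigma$-semilinear equations of the form $\lambda - \sigma^2(\lambda) = \ast$ in $W$ (always solvable since $W$ is strictly Henselian), reduce each of the three off-diagonal blocks to a single parameter: the $(1,2)$ block to $x$ exactly as in Proposition~\ref{prop:explicit-isogeny-2d}, the $(2,3)$ block to $y$ by the identical argument one level up, and the $(1,3)$ block to $z$. The main technical point to verify is that the three normalizations decouple, i.e., that the adjustments to $e_3'', f_3''$ only alter the third column-block of $F$ and so do not disturb the already-achieved normalization of the $(1,2)$ block, and similarly for $(2,3)$ versus $(1,3)$. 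This is the case because the flag structure makes each modification affect only strictly higher column-blocks, so the parameter-reduction can be carried out in sequence and yields the stated normal form for $F$; the form of $V$ is then forced by $FV = p$.
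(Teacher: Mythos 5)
Your proposal is essentially the paper's own (omitted) argument: the paper declares the proof to be the same iteration of Proposition~\ref{prop:explicit-isogeny-2d} along the flag, which is exactly what you do --- Lemma~\ref{lem:sublattices-of-D0} at each stage (with the filtration forcing the even-length case), then normalization by solving equations of the form $\lambda-\sigma^{2}(\lambda)=\ast$ in $W$, and your decoupling observation is correct since modifying $e_3,f_3$ by $W$-combinations of $e_1,f_1,e_2,f_2$ changes only the last two column-blocks of $F$. One small slip: $FV=p$ forces only the top-right entry of each strictly upper $2\times 2$ block to lie in $pW$ (the analogue of $b\in pW$ in Proposition~\ref{prop:explicit-isogeny-2d}), not that the top-left entry vanishes; that entry is instead absorbed by the same change of basis as in the surface case (where it contributes $\sigma(a)$ to $x$), so the normalization you describe still goes through unchanged.
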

We also record the following immediate corollary.
\begin{cor}\label{cor:explicit-frob-ver-3d}
Let $A_0$ be a supersingular abelian threefold over $\overline{\mb{F}}_p$. Then there exists a basis of the Dieudonn\'e module of $A_0[p^\infty]$ with respect to which the actions of Frobenius and Verschiebung are given as in \ref{eq:F,V,3d} above for some $x, y, z \in W$.
\end{cor}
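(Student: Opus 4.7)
The plan is to deduce this corollary directly from Proposition~\ref{prop:explicit-isogeny}, using only the classical fact that any supersingular abelian threefold over $\overline{\mb{F}}_p$ is isogenous to $E_0^3$. This last fact is a consequence of the Dieudonn\'e--Manin classification: the rational Dieudonn\'e module of a supersingular abelian variety of dimension $g$ is pure of slope $1/2$, and any two isocrystals over $\overline{\mb{F}}_p$ of the same rank and slope $1/2$ are isomorphic, so at the level of isogeny classes one has $A_0 \sim E_0^3$.

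Given this, the first step is to fix any isogeny $A_0 \to E_0^3$ over $\overline{\mb{F}}_p$, and pass to Dieudonn\'e modules to obtain an isogeny
\[
\varphi : (\mathbb{D}, F, V) \longrightarrow (\mathbb{D}', F', V') = (\mathbb{D}_0, F_0, V_0)^{\oplus 3}
\]
in the category of Dieudonn\'e modules. The second step is then to apply the first (unconditional) half of Proposition~\ref{prop:explicit-isogeny} to $\varphi$: this immediately produces a $W$-basis $\{e_1, f_1, e_2, f_2, e_3, f_3\}$ of $\mathbb{D}$ in which the matrices of $F$ and $V$ take the form displayed in~\eqref{eq:F,V,3d} for some triple $(x,y,z) \in W^3$, which is precisely the conclusion of the corollary.

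There is no real obstacle here; the corollary is essentially a restatement of the first half of Proposition~\ref{prop:explicit-isogeny} once one invokes the existence of an isogeny $A_0 \sim E_0^3$. In particular, the additional bullet-point conditions in Proposition~\ref{prop:explicit-isogeny} concerning compatibility with admissible filtrations play no role in the present statement and can simply be discarded.
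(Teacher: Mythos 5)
Your proposal is correct and matches the paper's (implicit) argument: the corollary is stated there as an immediate consequence of the unconditional first half of Proposition~\ref{prop:explicit-isogeny} together with the standard fact that every supersingular abelian threefold over $\overline{\mb{F}}_p$ is isogenous to $E_0^3$, exactly as you argue. The only cosmetic remark is that for the corollary one really only needs an isogeny of Dieudonn\'e modules, so your Dieudonn\'e--Manin step already suffices without upgrading the isocrystal isomorphism to an isogeny of abelian varieties.
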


For $x, y, z \in W$, let us denote by $(\mathbb{D}(x,y,z),F,V)$ the Dieudonn\'e module with basis $\{e_1,f_1,e_2,f_2,e_3,f_3\}$ with respect to which the Frobenius $F$, and Verschiebung $V$, are given as in \autoref{eq:F,V,3d} of Proposition \ref{prop:explicit-isogeny}. 

\begin{prop}\label{prop:constructible}
\begin{enumerate}
    \item Let $x , y, z \in W$ with $x , y \in W^\times$. Suppose that $x',y',z' \in W$ are such that there exists an isomorphism of Dieudonn\'e modules $\mathbb{D}(x, y, z) \cong \mathbb{D}(x', y', z').$ Then $x' , y' \in W^\times$.
    \item There is a Zariski constructible set $Z \subseteq (\overline{\mb{F}}_p)^3 \times (\overline{\mb{F}}_p)^3$ containing the mod $p$ reduction of $\{(x,y,z,x',y',z') \in W^6 : \mathbb{D}(x,y,z) \cong \mathbb{D}(x',y',z')\}$ and such that the two projections, $Z \xrightarrow{\pi_1} (\overline{\mb{F}}_p)^3$ and $Z \xrightarrow{\pi_2} (\overline{\mb{F}}_p)^3$ have finite fibers of size at most $p^{26}.$
   %
\end{enumerate}
\end{prop}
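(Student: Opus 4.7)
For \emph{part (1)}, consider the $a$-number $a(\mathbb{D}) := \dim_{\overline{\mb{F}}_p}(\ker F \cap \ker V)$ on $\mathbb{D}/p\mathbb{D}$, an invariant of the isomorphism class. A direct computation, starting from a general vector $v = \sum (a_i e_i + b_i f_i)$ in $\mathbb{D}(x,y,z)/p\mathbb{D}(x,y,z)$ and imposing $F(v) = V(v) = 0$ using the explicit matrices \eqref{eq:F,V,3d} (and the $\sigma^{\pm 1}$-linearity of $F$ and $V$), shows that $\ker F \cap \ker V$ is always spanned by $f_1$, and its dimension strictly exceeds $1$ if and only if $\bar x = 0$ or $\bar y = 0$. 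Hence $a(\mathbb{D}(x,y,z)) = 1$ if and only if $x, y \in W^\times$, and the invariance of the $a$-number under isomorphism yields part (1) immediately.

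For \emph{part (2)}, I parameterize isomorphisms $\mathbb{D}(x,y,z) \xrightarrow{\sim} \mathbb{D}(x',y',z')$ over $W$ by matrices $M \in \GL_6(W)$ satisfying the single equation $M F(x,y,z) = F(x',y',z') \sigma(M)$; the analogous commutation with $V$ follows by multiplying on the right by $\sigma(V)$ and using $F \sigma(V) = p I_6$. Reducing the resulting $36$ scalar equations modulo $p$ and analyzing them column by column: (a) the zero rows $1, 3, 5$ of $\bar F, \bar F'$ force $\bar m_{ij} = 0$ for $i$ odd and $j$ even, giving nine structural vanishings; (b) the row $2, 4, 6$ equations with odd columns express $\bar m_{ij}$ for $i, j$ both even as $p$-th power polynomial expressions in the remaining ``free'' matrix entries and in $(\bar x', \bar y', \bar z')$; (c) substituting (b) into the row $2, 4, 6$ equations with \emph{even} columns yields eight non-trivial polynomial relations in the free matrix entries and in $(\bar x, \bar y, \bar z, \bar x', \bar y', \bar z')$, involving Frobenius twists up to the $p^2$-th power (for instance $\bar x \bar m_{51}^p = 0$ and $\bar x' (\bar m_{31}^{p^2} + \bar y'^p \bar m_{61}^{p^2}) + \bar z' \bar m_{51}^{p^2} = 0$). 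I then take $Z$ to be the image under projection to $(\overline{\mb{F}}_p)^6$ of the constructible subscheme of $\GL_6(\overline{\mb{F}}_p) \times (\overline{\mb{F}}_p)^6$ cut out by these eight relations; Chevalley's theorem ensures $Z$ is constructible, and by construction it contains the mod $p$ reductions of $W$-isomorphism pairs.

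To bound the fibers, fix $(\bar x, \bar y, \bar z)$ (the case of $(\bar x', \bar y', \bar z')$ being symmetric). When $\bar x, \bar y \neq 0$ (the interesting case, by part (1)), two of the eight relations immediately give $\bar m_{51} = \bar m_{53} = 0$, and a third gives $\bar m_{31}^p = -\bar y' \bar m_{61}^p$. Analogous simplifications of the remaining equations reduce the system to polynomial equations in $(\bar x', \bar y', \bar z')$ and a smaller set of matrix entries; eliminating these (using $\det \bar M \neq 0$ to rule out degenerate branches) produces polynomial equations in $(\bar x', \bar y', \bar z')$ alone. A careful degree count of the Frobenius twists involved in this elimination gives total degree $\leq p^{26}$, so B\'ezout's theorem yields the fiber size bound.

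The \textbf{main obstacle} is this final degree count. The equations have mixed and high Frobenius-twist degrees (some involving $p$-th and others $p^2$-th powers), and various degenerate branches must be carefully handled using the invertibility of $\bar M$. It is the combinatorial bookkeeping of these twists through the elimination that produces the precise exponent $p^{26}$.
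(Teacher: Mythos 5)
Your argument for part (1) is correct and takes a genuinely different, cleaner route than the paper: you use that the $a$-number $\dim_{\overline{\mb{F}}_p}(\ker F\cap \ker V)$ of $\mathbb{D}(x,y,z)/p\mathbb{D}(x,y,z)$ is an isomorphism invariant, and a direct computation (here $\ker V=\langle f_1,f_2,f_3\rangle$ and the intersection with $\ker F$ is cut out by $\overline{x}\,b_2^p+\overline{z}\,b_3^p=0$, $\overline{y}\,b_3^p=0$) shows it equals $1$ exactly when $\overline{x},\overline{y}\neq 0$. The paper instead extracts part (1) from the explicit matrix equations for an isomorphism together with invertibility of $M$ mod $p$; your invariant-theoretic argument is shorter and buys independence from the bookkeeping of part (2).

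For part (2), however, there is a genuine and fatal gap: the set you propose --- the image of the locus in $\GL_6(\overline{\mb{F}}_p)\times(\overline{\mb{F}}_p)^6$ cut out by the mod-$p$ reductions of the matrix equations --- does \emph{not} have finite fibers over $(\overline{x},\overline{y},\overline{z})$. Being the reduction of a $W$-isomorphism is strictly stronger than satisfying the reduced equations, and the finiteness comes precisely from $p$-adic divisibility information that is invisible after naive reduction. Concretely, in the paper's notation (odd columns $A,B,C$ of $M$, equations (I.i)--(III.vi)): fix $\overline{x},\overline{y}\neq 0$, set $\overline{A_3}=\overline{A_4}=\overline{A_5}=\overline{A_6}=0$, $\overline{A_1}=1$, pick $\overline{B_3}\in\overline{\mb{F}}_p^{\times}$ \emph{arbitrary} and $\overline{B_6}$ with $\overline{y}^{\,p}\overline{B_6}^{\,p^2}=\overline{B_3}-\overline{B_3}^{\,p^2}$ (so the reduction of (II.iii) holds); then the reduction of (II.ii) gives $\overline{x'}=\overline{x}\,\overline{B_3}$, and one checks that the remaining reduced equations can be solved for $\overline{y'},\overline{C_5}$, etc., with $\det\overline{M}\neq 0$. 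Thus the reduced locus contains, over a fixed $(\overline{x},\overline{y},\overline{z})$, a one-parameter family with $\overline{x'}$ ranging over $\overline{\mb{F}}_p^{\times}$: the fiber of $\pi_1$ is infinite, and no B\'ezout-type degree count can repair this. What is missing --- and what the paper's proof supplies --- are extra relations obtained from the \emph{integral} equations by tracking divisibility and dividing by $p$: e.g.\ in exactly the situation above ($A_4\equiv 0$), (I.iv) forces $p^2\mid A_5$ and then (II.vi), divided by $p$, forces $\overline{B_6}\in\mb{F}_{p^2}$, a constraint not implied by the reduced system, which is what cuts the fiber down to finitely many points; the paper's introduction of auxiliary variables such as $A_5/p$ and its case analysis on which entries are units serve the same purpose elsewhere. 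Your construction of $Z$ must be refined by adjoining such divisibility-derived relations before any finiteness, let alone the bound $p^{26}$ (which in your write-up is only asserted), can be established.
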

\begin{proof}
    Let us denote by $\{e_1',f_1',e_2',f_2',e_3',f_3'\}$ the basis of $\mathbb{D}(x',y',z')$ with respect to which the Frobenius $F'$ is as in \autoref{eq:F,V,3d} of Proposition~\ref{prop:explicit-isogeny} (with $x', y', z'$ in place of $x, y, z$). 
    
    The Dieudonn\'e modules $\mathbb{D}(x,y,z)$ and $\mathbb{D}(x',y',z')$ are isomorphic if and only if there exists an $M \in \GL_6{(W)}$ such that $M^{-1}F \sigma(M) = F'.$ 
    Suppose that the first, third and fifth columns of $M$ are respectively: \[\begin{pmatrix}
        A_1\\A_2\\A_3\\A_4\\A_5\\A_6
    \end{pmatrix}, \begin{pmatrix}
        B_1\\B_2\\B_3\\B_4\\B_5\\B_6
    \end{pmatrix}, \text { and } \begin{pmatrix}
        C_1\\C_2\\C_3\\C_4\\C_5\\C_6
    \end{pmatrix}.\]


    It is a straightforward computation that the equation $F \sigma(M) = M F'$ holds if and only if the following set of equations hold.
    \begin{align}
        \sigma^2(A_1) -A_1 &= -\sigma(x)\sigma^2(A_4) \tag{I.i}\\
        p\left[ \sigma^2(A_2)-A_2\right] &= -xA_3-zA_5 \tag{I.ii}\\
        \sigma^2(A_3)-A_3 &= -\sigma(y)\sigma^2(A_6) \tag{I.iii}\\
        p\left[ \sigma^2(A_4)-A_4 \right] &= -y\sigma^2(A_5) \tag{I.iv}\\
        \sigma^2(A_5) &= A_5 \tag{I.v}\\
        \sigma^2(A_6) &= A_6 \tag{I.vi}\\
        \sigma^2(B_1)-B_1 &= x'\sigma(A_2)-\sigma(x)\sigma^2(B_4)-\sigma(z)\sigma^2(B_6) \tag{II.i}
    \end{align}
    (II.ii)\begin{align*}
        p\left[\sigma^2(B_2)-B_2 \right] =\, &x'\sigma^{-1}(A_1)-x\left[\sigma^2(B_3)+\sigma(y)\sigma^2(B_6) \right] \\ &+x'z\sigma(A_6)-z\sigma^2(B_5) 
    \end{align*}
    \begin{align}
        \sigma^2(B_3)-B_3 &= x'\sigma(A_4) -\sigma(y)\sigma^2(B_6) \tag{II.iii}\\
        p\left[ \sigma^2(B_4)-B_4\right] &= x'\left[ \sigma(A_3)+y\sigma(A_6)\right]-y\sigma^2(B_5) \tag{II.iv} \\
        \sigma^2(B_5)-B_5 &= x'\sigma(A_6) \tag{II.v} \\
        p\left[ \sigma^2(B_6) - B_6\right] &= x'\sigma(A_5) \tag{II.vi} \\
        \sigma^2(C_1)-C_1 &= y'\sigma(B_2)-\sigma(x)\sigma^2(C_4)+z'\sigma(A_2)-\sigma(z)\sigma^2(C_6) \tag{III.i} 
    \end{align}
    (III.ii)\begin{align*}
        p\left[ \sigma^2(C_2)-C_2\right] =\,  &y'\left[ \sigma(B_1)+x\sigma(B_4)+z\sigma(B_6)\right] \\ &+z'\left[\sigma(A_1)+x\sigma(A_4)+z\sigma(A_6) \right]\\ &-x\left[\sigma^2(C_3)+\sigma(y)\sigma^2(C_6) \right] -z\sigma^2(C_5)  
    \end{align*}
    \begin{align}
        \sigma^2(C_3)-C_3 &= y'\sigma(B_4)+z'\sigma(A_4)-\sigma(y)\sigma^2(C_6) \tag{III.iii} 
    \end{align}
    (III.iv)\begin{align*}
        p\left[\sigma^2(C_4)-C_4 \right] =\, &y'\left[\sigma(B_3)+y\sigma(B_6)\right] \\ &+z'\left[\sigma(A_3)+y\sigma(A_6) \right]-y\sigma^2(C_5) 
    \end{align*}
    \begin{align}    
        \sigma^2(C_5)-C_5 &= y'\sigma(B_6)+z'\sigma(A_6) \tag{III.v}\\
        p\left[\sigma^2(C_6)-C_6 \right] &= y'\sigma(B_5)+z'\sigma(A_5) \tag{III.vi}
    \end{align}
    
    The equation (I.iv) implies that $p | A_5$, and thus from (I.ii) we get that $p | A_3$, and then from (I.iii) that $p | A_6$. This and equation (II.iv) then imply that $p| B_5.$ 
    We note that $M \in \GL_6(W)$ if and only if its mod $p$ reduction $\overline{M}$ is in $\GL_6(\overline{\mb{F}}_p).$ It is easy to see from a computation of the determinant of $\overline{M}$, thus that $M \in \GL_6(W)$ if and only if $\left[A_1\cdot(\sigma(B_3)+y\sigma(B_6))\cdot C_5\cdot B_3\right] \in W^\times.$
    From (II.ii) and (III.iv) now we see that $x' , y' \in W^\times$. This proves the first part of the proposition.
    
    Furthermore, (II.ii) implies that \begin{equation*}
        x'\sigma^{-1}(A_1) \equiv x\left[\sigma^2(B_3)+\sigma(y)\sigma^2(B_6) \right] \pmod{p} 
    \end{equation*} The above equation, with (I.i) and (II.iii), implies that $x'\left[\sigma(A_1)+x\sigma(A_4) \right] \equiv x\left[ x'\sigma(A_4)+ B_3\right]\pmod{p}.$ Thus, 
    \begin{equation}
        x'\sigma(A_1)  \equiv x B_3 \pmod{p} \tag{$\dagger$}.
    \end{equation} In particular, $\overline{x'} =  ({\overline{x}\overline{B_3}})/({\overline{A_1}})^p.$

    For ease of notation, let us set $A_5' \defeq A_5/p \in W(\mb{F}_{p^2}), A_6' \defeq A_6/p \in W(\mb{F}_{p^2})$ and $A_3' \defeq A_3/p \in W.$ It is easy to see that for fixed $x, y, z$, there are at most $p^{12}$ possibilities for $(\overline{A_1} , \overline{A_2} ,\overline{A_3'} , \overline{A_4} , \overline{A_5'} , \overline{A_6'})$ in $(\overline{\mb{F}}_p)^6$. We now consider two cases.
    
    \textbf{Case 1:} Suppose that $A_4 \equiv 0 \pmod{p}.$
    
    Then (I.iv) implies that $p^2|A_5$ and by (II.vi) we get that $\overline{B_6} \in \mb{F}_{p^2}.$ (II.iii) implies then that there are at most $p^4$ possibilities (for fixed $x,y,z$) in $(\overline{\mb{F}}_p)^2$ for the tuple $(\overline{B_3},\overline{B_6})$.  In Case 1, we have by (I.i) that $\overline{A_1} \in \mb{F}_{p^2}$ and thus recalling that $\overline{x'} =  ({\overline{x}\overline{B_3}})/({\overline{A_1}})^p$, we see that there are at most $p^6$ many possibilities in Case 1, for $(\overline{A_1} , \overline{B_3} , \overline{B_6} , \overline{x'}) \in (\overline{\mb{F}}_p)^4.$ 
    
    \textbf{Case 2:} Suppose that $A_4 \in W^\times.$ 
    
    From $(\dagger)$, we recall that $x' - \frac{x B_3}{\sigma(A_1)} \equiv 0 \pmod{p}.$ Using this along with (II.iii) we get that
    \begin{equation}
        \sigma(y)\sigma^2(B_6) \equiv \left(1+x \sigma\left(\frac{A_4}{A_1} \right)\right)B_3 - \sigma^2(B_3) \pmod{p} \tag{$\ddagger$}
    \end{equation}
    From (II.vi) and (II.iii) we get that \begin{equation*}
        \left[\sigma(y)\sigma(A_5')-\sigma(A_4) \right]\sigma^2(B_6)+\sigma(A_4)B_6 + \sigma(A_5')\sigma^2(B_3)-\sigma(A_5')B_3 = 0.
    \end{equation*}
    Using $(\ddagger)$ now to eliminate $B_6$ from the above equation we get that \begin{align*}
        0 \equiv\, &\sigma^2(B_3)\left(\frac{\sigma(A_4)}{\sigma(y)}\right) \\ &+B_3\left[ \frac{\left(\sigma(y)\sigma(A_5')-\sigma(A_4)\right)\cdot (1+x\sigma(A_4/A_1))}{\sigma(y)} - \frac{\sigma(A_4)}{\sigma^{-1}(y)}\right] \\ &+\sigma^{-2}(B_3)\left[\frac{\sigma(A_4)\sigma^{-2}(1+x\sigma(A_4/A_1))}{\sigma^{-1}(y)} \right] \pmod{p}.
    \end{align*}
We note again that for fixed $(\overline{x},\overline{y},\overline{z}) \in (\overline{\mb{F}}_p)^3$, one sees from the first set of equations (I.i)-(I.vi) that $(\overline{A_1},\overline{A_4},\overline{A_5'}) \in (\overline{\mb{F}}_p)^3$ has at most $p^6$ possibilities and depends algebraically on $(\overline{x},\overline{y},\overline{z})$. For each such choice of $(\overline{A_1},\overline{A_4},\overline{A_5'})$, we have at most $p^4$ choices for $\overline{B_3}$ which again depends algebraically on $(\overline{A_1},\overline{A_4},\overline{A_5'})$ and we note that by $(\ddagger)$, $\overline{B_6}$ is then determined by $\overline{B_3}$ and since $\overline{x'} = \frac{\overline{x}\overline{B_3}}{(\overline{A_1})^p}$, we see that this in turns determines $\overline{x'}$ as well. 

Therefore, in all, combining Case 1 and Case 2, we see that the mod $p$ reduction of \[\{(x,y,z,A_1,A_4,A_5',B_3,B_6,x') \in W^9 : x,y \in W^\times \text{ and (I.i)-(III.vi) satisfied}\}\] is contained in a Zariski constructible subset of $(\overline{\mb{F}}_p)^9$ whose projection to the first three coordinates is quasi-finite with fibers of size at most $p^{11}.$ 

Similarly, from equation (III.iv), we have that \begin{equation*}
    y'\left[\sigma(B_3)+y\sigma(B_6) \right] \equiv y \sigma^2(C_5) \pmod{p}. 
\end{equation*}
We recall that $\sigma(B_3)+y\sigma(B_6)\in W^\times$. If $B_6 \equiv 0 \pmod{p},$ then (III.v) implies that $\overline{C_5} \in \mb{F}_{p^2},$ in which case we have at most $p^{13}$ choices for \[(\overline{A_1},\overline{A_4},\overline{A_5'},\overline{B_3},\overline{B_6},\overline{C_5},\overline{x'},\overline{y'}) \in (\overline{\mb{F}}_p)^{8}.\]

If on the other hand, $B_6 \in W^\times$, then from (III.v) we have \[\frac{\sigma^2(C_5)-C_5}{\sigma(B_6)\sigma^2(C_5)} \equiv \frac{y}{\sigma(B_3)+y\sigma(B_6)} \pmod{p}.\] This implies that for each fixed choice of $(\overline{B_3},\overline{B_6},\overline{y})$ there are at most $p^2$ possibilities for $\overline{C_5},$ which in turn determines $\overline{y'}.$ Therefore, putting everything together we see that there is a Zariski constructible subset of $\overline{\mb{F}}_p$ containing the mod $p$ reduction of $\{(x,y,z,A_1,A_4,A_5',B_3,B_6,x',C_5,y') : x,y \in W^\times \text{ and (I.i)-(III.vi) hold}\}$ and such that the projection to the first three coordinates has fibers of size at most $p^{14}.$

(III.ii) implies that \begin{align*}
    z'\left[\sigma^{-1}(A_1) \right] \equiv\, &y'\left[\sigma(B_1)+x\sigma(B_4)+z\sigma(B_6) \right] \\ &+ x\left[\sigma^2(C_3)+\sigma(y)\sigma^2(C_6) \right] + z \sigma^2(C_5) \pmod{p}.
\end{align*}
From (III.iii), (III.v), and (III.vi), we see that the reduction of the set \begin{align*} \{(x,y,z,x',y',z',A_1,A_4,A_5',A_6',B_1,B_3,&B_4,B_5',B_6,C_3,C_5,C_6)\in W^{18} : \\ &x,y \in W^\times \text{ and (I.i)-(III.vi) hold}\} \end{align*} 
is contained in a Zariski constructible subset of $(\overline{\mb{F}}_p)^{18}$ whose projection to the first three coordinates has fibers of size at most $p^{26}.$ This completes the proof of the Proposition.
\end{proof}

\section{Computations}
\subsection{Lifting abelian surfaces}
Let $A_0$ be a supersingular abelian surface over $\overline{\mb{F}}_p$, and we write its Dieudonn\'e module in the form given by Proposition~\ref{prop:explicit-isogeny-2d} for some particular $x \in W(\overline{\mb{F}}_p)$. If $x$ is divisible by $p$, then $A_0$ is itself the square of an elliptic curve, and the result is trivial in this case. We assume from now on that $x$ is a unit, so that we can find $\beta \in W^{\times}$ satisfying 
\[
\sigma^2(\beta)-\beta = x.
\]
Let $\zeta \in W^{\times}$ be a primitive $(p^2-1)$-th root of unity, and write $u\defeq \sigma(\zeta)-\zeta$.   

The following is straightforward by an application of Hensel's lemma.
\begin{prop}\label{prop:henselsurface}
There exists $b\in W^{\times}$ such that 
\[
p\beta b^2+ub -\sigma^{-1}(\beta)=0.
\]
\end{prop}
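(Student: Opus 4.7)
The plan is a direct application of Hensel's lemma to the polynomial
\[
f(T) = p\beta T^2 + uT - \sigma^{-1}(\beta) \in W[T].
\]
The only preliminary point of substance is to show that $u \in W^{\times}$. Since $\zeta$ has order exactly $p^2-1$, its reduction $\bar\zeta \in \overline{\mb{F}}_p$ cannot lie in $\mb{F}_p$ (which would force $p^2-1 \mid p-1$). Because $\sigma$ lifts the absolute Frobenius, $\sigma(\zeta) \equiv \zeta^p \pmod p$, so
\[
u = \sigma(\zeta) - \zeta \equiv \zeta^p - \zeta \not\equiv 0 \pmod p,
\]
giving $u \in W^{\times}$.

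With this in hand, reducing $f$ modulo $p$ yields the linear equation $\bar u\, T - \overline{\sigma^{-1}(\beta)} = 0$, with the unique root $\bar b_0 = \overline{\sigma^{-1}(\beta)}/\bar u$ in $\overline{\mb{F}}_p$. Since $\beta \in W^{\times}$ and $\sigma$ is an automorphism, $\sigma^{-1}(\beta)$ is a unit, so $\bar b_0 \in \overline{\mb{F}}_p^{\times}$. The derivative $f'(T) = 2p\beta T + u$ reduces modulo $p$ to the unit $\bar u$, so $\bar b_0$ is a simple root.

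Hensel's lemma then produces a unique lift $b \in W$ with $f(b) = 0$, and since $\bar b \in \overline{\mb{F}}_p^{\times}$, this $b$ automatically lies in $W^{\times}$, as required. The main (and only nontrivial) obstacle is the verification that $u$ is a unit; after that, the conclusion is essentially bookkeeping.
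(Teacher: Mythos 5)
Your proof is correct and follows exactly the route the paper indicates (the paper leaves this as "straightforward by an application of Hensel's lemma"): you verify that $u=\sigma(\zeta)-\zeta$ is a unit because $\bar\zeta\notin\mb{F}_p$, note the reduction of $p\beta T^2+uT-\sigma^{-1}(\beta)$ is linear with nonzero simple root, and lift by Hensel in the $p$-adically complete ring $W$. Nothing is missing.
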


Let $\mb{D}'$ denote the Dieudonn\'e module of a product of supersingular elliptic curves. We will work with a basis $e_1', f_1', e_2', f_2'$ such that the  actions of Frobenius and Verschiebung are given by 
\[
F'=V'= \begin{pmatrix}
0 & p & & \\
1 & 0 & & \\
 &  & 0 & p \\
 &  & 1 & 0\\
\end{pmatrix}.
\]
Moroever, consider the following filtration, where $b$ is as in Proposition~\ref{prop:henselsurface}
\[
\Fil'\defeq \langle f_1'+pbe_1', f_2'+pbe_2' \rangle
\]
on $\mb{D}'$, which corresponds to a square of an elliptic curve over $W$, which we denote by $\tilde{E}\times \tilde{E}$. 
The following now finishes the proof of the first part of Theorem~\ref{thm:main}.
\begin{claim}
The filtration on $\mb{D}$ given by 
\[
\Fil \defeq \langle f_1+pbe_1, f_2+pbe_2-\sigma^{-1}(x)e_1 \rangle 
\] 
gives rise to a lift of $A$ which is isogenous to $\tilde{E} \times \tilde{E}$.
\end{claim}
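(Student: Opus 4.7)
The plan is to apply Proposition~\ref{reduction-to-d-modules} with $r=1$, $n_1 = 2$, and filtration $\Fil_0 \defeq \langle f + pbe\rangle \subset \mathbb{D}_0$ corresponding to $\tilde{E}$; this reduces the claim to (i) verifying that $\Fil$ (and $\Fil_0$) are admissible, and (ii) producing an isogeny of filtered Dieudonn\'e modules $\psi : (\mathbb{D}, \Fil, F, V) \to (\mathbb{D}_0, \Fil_0, F_0, V_0)^{\oplus 2}$. Admissibility of $\Fil_0$ is immediate, and for $\Fil$: reducing modulo $p$ yields $\langle f_1, f_2 - \sigma^{-1}(x)e_1\rangle$, which one checks directly from the matrix of $F$ in \eqref{eq:F,V,2d} is $\ker(F \bmod p)$, while $\{e_1, e_2\}$ together with the chosen generators visibly form a $W$-basis of $\mathbb{D}$, so $\mathbb{D}/\Fil$ is free of rank $2$.

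The main step is constructing $\psi$. The guiding observation is that, over $W[1/p]$, the vectors $e_2 - (\beta/p)f_1$ and $f_2 - \sigma(\beta)e_1$ span an $F$-stable submodule isomorphic to $\mathbb{D}_0 \otimes \mathbb{Q}_p$, precisely because $\sigma^2(\beta) - \beta = x$. Clearing denominators, and allowing for a twist that will be needed to match the filtrations, I propose to set
\[
\psi(e_1) \defeq pe_1',\ \psi(f_1) \defeq pf_1',\ \psi(e_2) \defeq pe_2' + \beta f_1' + \zeta e_1',\ \psi(f_2) \defeq pf_2' + p\sigma(\beta)e_1' + \sigma(\zeta)f_1'.
\]
Checking that $\psi$ commutes with $F$ and $F'$ is a short calculation using $\sigma^2(\beta) - \beta = x$ and $\sigma^2(\zeta) = \zeta$; the matrix of $\psi$ is upper triangular with determinant $p^4$, so $\psi$ is an isogeny, and compatibility with $V, V'$ is then automatic.

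It remains to verify $\psi(\Fil) \subseteq \Fil'$. One generator is immediate: $\psi(f_1 + pbe_1) = p(f_1' + pbe_1') \in \Fil'$. For the other, I would expand $\psi(f_2 + pbe_2 - \sigma^{-1}(x)e_1)$ and subtract $p(f_2' + pbe_2')$, leaving a vector in $\langle e_1', f_1'\rangle$; after substituting $\sigma^{-1}(x) = \sigma(\beta) - \sigma^{-1}(\beta)$, demanding this residual to lie in $\langle f_1' + pbe_1'\rangle$ collapses to the single identity $bu + pb^2\beta = \sigma^{-1}(\beta)$ with $u = \sigma(\zeta) - \zeta$, which is exactly the Hensel relation of Proposition~\ref{prop:henselsurface}. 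I expect the main obstacle to be conceptual rather than computational: a naive isogeny (setting $\zeta = 0$ above) fails by precisely $ub$, and the role of the root-of-unity twist, and hence the reason for setting up the Hensel equation with $u = \sigma(\zeta) - \zeta$ in the first place, is exactly to absorb this discrepancy. Once $\psi$ is in hand, Proposition~\ref{reduction-to-d-modules} delivers the desired lift $\mathcal{A}$ isogenous to $\tilde{E} \times \tilde{E}$ over $W$.
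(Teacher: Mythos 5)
Your proposal is correct and follows essentially the same route as the paper: reduce via Proposition~\ref{reduction-to-d-modules}, write down an explicit isogeny using $\beta$ and the root of unity $\zeta$, check compatibility with $F$ via $\sigma^2(\beta)-\beta=x$ and $\sigma^2(\zeta)=\zeta$, and observe that sending $\Fil$ into $\Fil'$ collapses to the Hensel identity $p\beta b^2+ub-\sigma^{-1}(\beta)=0$ of Proposition~\ref{prop:henselsurface}. The only difference is cosmetic: your $\psi$ carries an extra factor of $p$ on the $e_2',f_2'$ components (determinant $p^4$ instead of $p^2$), which is immaterial since both maps are isogenies of filtered Dieudonn\'e modules.
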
 

\begin{proof}[Proof of Claim]
By Proposition~\ref{reduction-to-d-modules}, it suffices to show that the filtered Dieudonn\'e modules $(\mb{D}, \Fil)$ and $(\mb{D}', \Fil')$ are isogenous.  

Consider the map $\varphi: \mb{D}\rightarrow \mb{D}'$ given by 
\begin{align*}
    e_1 &\mapsto pe_1' \\
    f_1 &\mapsto pf_1'\\
    e_2 &\mapsto \zeta e_1'+\beta f_1' +e_2' \\
    f_2 &\mapsto p\sigma(\beta)e_1' +\sigma(\zeta)f_1' +f_2'.
\end{align*}
It remains  to check that $\varphi$ commutes with Frobenius and Verschiebung, and sends $\Fil$ to $\Fil'$; this is a straightforward computation.
\end{proof}

\subsection{Lifting abelian threefolds}
In this subsection, we prove that every supersingular abelian threefold over $\overline{\mb{F}}_p$, admits a lift to an abelian scheme over $W$ such that the lift is isogenous to a product of three elliptic curves.

Let $A_0$ be a supersingular abelian threefold over $\overline{\mb{F}}_p.$ Let $(\mathbb{D},F,V)$ denote the Dieudonn\'e module of the $p$-divisible group $A_0[p^\infty]$ associated to $A_0$. By Corollary \ref{cor:explicit-frob-ver-3d}, we may choose a $W$-basis $\{e_1,f_1,e_2,f_2,e_3,f_3\}$ of $\mathbb{D}$ with respect to which the Frobenius and Verschiebung operators are as described in \autoref{eq:F,V,3d} of Proposition  \ref{prop:explicit-isogeny} for some $x, y, z \in W$.

Pick an element $\beta' \in W^\times$ such that \[\sigma^2(\beta')-\beta' = x.\]

Pick an element $Q \in W^\times$ such that \[\sigma^2(Q)-Q = py\sigma(\beta').\] Note that if $Q \in W^\times$ satisfies the equation $\sigma^2(Q)-Q = py\sigma(\beta')$ then so does $Q+u$ for any $u \in W(\mb{F}_{p^2}).$ Thus, we may also insist that $1-Q \in W^\times$ and $Q - \sigma(Q) \in W^\times.$

Pick an element $R$ in $W^\times$ such that \[\sigma^2(R)-R = y+pz\] and such that $R - \beta'Q \in W^\times.$

Pick an element $T \in W^\times$ such that \[\sigma^2(T)-T = y.\] Again, if $T \in W^\times$ satisfies the equation $\sigma^2(T)-T = y$ then so does $T+u$ for every $u\in W(\mb{F}_{p^2})$. Thus, we may insist additionally that $(T - \beta'Q) \in W^\times$ and $(-Q(T-1)+R-\beta'Q) \in W^\times$ and also that $(\sigma^{-1}(Q)(T-1)-(R-\beta'Q)) \in W^\times.$

To simplify the notations in the computations that follow, we introduce certain auxiliary elements $\lambda_1,\mu_1,\nu_1,\Delta_1,\nu_2, \Delta_2,\Phi, \psi, \theta, S \in W$ defined as follows:

\begin{equation*}
    \begin{aligned}[c]
    \lambda_1 &:= pR\sigma^{-1}(\beta')-Q, \\
    \mu_1 &:= \sigma^{-1}(Q)-p\beta'\sigma^{-1}(R), \\
    \nu_1 &:= p(R-\beta' Q), \\
    \Delta_2 &:= \sigma^{-1}(T)-\sigma^{-1}(S),\\
    \Phi &:= \Delta_1\nu_2 - \Delta_2 \nu_1, \\
    \end{aligned}
    \begin{aligned}[c]
    \Delta_1 &:= \sigma^{-1}(R)-\sigma^{-1}(Q)\sigma^{-1}(\beta')\\
    S &:= 1+pT, \\
    \nu_2 &:= p(T-S),\\
        \psi &:= \mu_1 \nu_2 - \nu_1, \\
        \theta &:= \lambda_1\nu_2 + \nu_1.
        \end{aligned}
\end{equation*}

Note that $\psi = p[(\sigma^{-1}(Q)-p\beta'\sigma^{-1}(R))(T-S)-(R-\beta'Q)]$ and $\theta = p[(pR\sigma^{-1}(\beta')-Q)(T-S)+(R-\beta'Q)]$. We have that $1 = \mathrm{ord}_p(\psi) = \mathrm{ord}_p(\nu_1) = \mathrm{ord}_p(-\lambda_1\psi+\mu_1\theta) = \mathrm{ord}_p(\theta)$ and we note that $\Phi \in p W.$

By an application of Hensel's Lemma, we may find a $c \in W$, such that \[c^2[-\nu_1\psi]+c[-\lambda_1\psi+\mu_1\theta+\nu_1\Phi]+[\lambda_1\Phi-\Delta_1\theta] = 0.\]

Set $i := \frac{\Phi-\psi c}{\theta}.$ Note that $i \in W$ since $\mathrm{ord}_p(\theta) = 1.$ 

Set $a := \frac{\sigma^{-1}(R)+iQ}{\sigma^{-1}(Q)+piR}.$ We note that $a \in W$ since $\sigma^{-1}(Q)+piR \in W^\times.$
One may also check that $a = \frac{\sigma^{-1}(\beta')+c}{1+pc\beta'}.$ 

We let $b' := \frac{\sigma^{-1}(T)+iS}{\sigma^{-1}(S)+piT}.$ Note that $b' \in W$ since $\sigma^{-1}(S)+piT \in W^\times.$ We may also check that $b' =\frac{1+c}{1+pc}.$

Let $\Fil \subseteq \mathbb{D}$ be the following admissible filtration \[\Fil \defeq \langle f_1+pae_1, f_2 - \sigma^{-1}(x)e_1+pce_2, f_3-\sigma^{-1}(y)e_2+pie_3\rangle.\]

Let $E_0$ be a supersingular elliptic curve over $\overline{\mb{F}}_p$ and let $(\mathbb{D}_0, F_0, V_0)$ denote the Dieudonn\'e module of $E_0[p^\infty]$. There is a $W$-basis $\{e,f\}$ of $\mathbb{D}_0$ such that $F_0(e) = f = V_0(e)$ and $F_0(f) = pe = V_0(f).$ 
As in \S~\ref{sec:lifitng-threefolds}, we set $(\mathbb{D}',F',V') := (\mathbb{D}_0,F_0,V_0)^{\oplus 3}$ and let $e_1' \defeq (e,0,0), f_1' \defeq (f,0,0), e_2'\defeq (0,e,0),f_2'\defeq (0,f,0), e_3'\defeq (0,0,e)$ and $f_3'\defeq(0,0,f)$.

Let $\Fil' \subseteq \mathbb{D}'$ be the filtration \[\Fil' \defeq \langle f_1'+pae_1', f_2'+pb'e_2', f_3'+pie_3'\rangle. \] 

Consider the $W$-linear map $\varphi : \mathbb{D} \rightarrow \mathbb{D}'$ defined as follows:
\begin{align*}
    \varphi : \mathbb{D} &\rightarrow \mathbb{D}'\\
    e_1 &\mapsto p^2 e_1',\\
    f_1 &\mapsto p^2f_1',\\
    e_2 &\mapsto pe_1'+p\beta'f_1' + pe_2' + pf_2',\\
    f_2 &\mapsto pf_1' + p^2\sigma(\beta')e_1'+pf_2'+p^2e_2',\\
    e_3 &\mapsto Qe_1'+Rf_1'+Se_2'+Tf_2'+e_3',\\
    f_3 &\mapsto \sigma(Q)f_1'+p\sigma(R)e_1'+\sigma(S)f_2'+p\sigma(T)e_2'+f_3'.
\end{align*}

It is a straightforward computation to verify that $\varphi$ defines an isogeny of filtered Dieudonn\'e modules $\varphi : (\mathbb{D},\Fil,F,V) \rightarrow (\mathbb{D}',\Fil',F',V')$. We conclude by  Proposition~\ref{reduction-to-d-modules}.

\subsection{Impossibility of lifting general supersingular threefolds to varieties isogenous to cubes}

Let $A_0$ denote a supersinguar abelian threefold over $\overline{\mb{F}}_p$ and $(\mathbb{D},F,V)$ the Dieudonn\'e module of $A_0[p^\infty]$. 

\begin{prop}\label{prop:lin-dependence}
Suppose there is an abelian scheme $\mathcal{A}$ over $\Spec(W)$ that lifts $A_0$, and an elliptic curve $\mathcal{E}$ over $\Spec(W)$ with an isogeny $\varphi :  \mathcal{A} \rightarrow \mathcal{E}^3$ over $W$. Then there is a $W$-basis $\{e_1,f_1,e_2,f_2,e_3,f_3\}$ of $\mathbb{D}$ with respect to which the action of the Frobenius is given by 

    \[F = \begin{pmatrix}
0 & p &   &   &   &  \\
1 & 0 &   & x &   & z  \\
  &   & 0 & p &   &   \\
  &   & 1 & 0 &   & y \\
  &   &   &   & 0 & p \\
  &   &   &   & 1 & 0\\
\end{pmatrix}\] for some $x , y, z \in W$ such that the reductions $\overline{x},\overline{y}$ in $\overline{\mb{F}}_p$ are linearly dependent over $\mb{F}_{p^2}$.
\end{prop}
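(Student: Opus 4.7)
The plan is to use the isogeny $\varphi\colon \mathcal{A} \to \mathcal{E}^3$ over $W$, pass to Dieudonn\'e modules via Proposition~\ref{reduction-to-d-modules}, and exploit the crucial fact that all three factors in the target are the \emph{same} lifted elliptic curve $\mathcal{E}$. Concretely, the resulting isogeny of filtered Dieudonn\'e modules is
\[
\varphi \colon (\mathbb{D}, \Fil, F, V) \to (\mathbb{D}_0, \Fil_0, F_0, V_0)^{\oplus 3},
\]
with one and the same filtration $\Fil_0 = \langle f + p\alpha e\rangle$ on each of the three summands, for some $\alpha \in W$. Apply the threefold version of Proposition~\ref{prop:explicit-isogeny} to adapt a $W$-basis $\{e_1, f_1, \dots, e_3, f_3\}$ of $\mathbb{D}$ to $\varphi$, so that $F$ takes the displayed form with entries $x, y, z \in W$ and $\varphi$ is encoded by coefficients $A, B, C_1, C_2, C_3, C_4 \in W$ together with exponents $r, s, t \geq 0$ giving $\varphi(e_1) = p^r e_1'$, $\varphi(e_2) = Ae_1' + Bf_1' + p^s e_2'$, and analogously for $e_3$.

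Frobenius compatibility $\varphi \circ F = F' \circ \varphi$ then produces a system of equations paralleling (I.i)--(III.vi) in the proof of Proposition~\ref{prop:constructible}. Its main consequences are: $A, C_3 \in W(\mathbb{F}_{p^2})$ (so their reductions lie in $\mathbb{F}_{p^2}$), $\sigma^2(B) - B = xp^{r-1}$, and $\sigma^2(C_4) - C_4 = yp^{s-1}$. If $r = 0$ then $\overline{x} = 0$ and if $s = 0$ then $\overline{y} = 0$, in either of which cases the asserted dependence is trivial; assume henceforth $r, s \geq 1$. Imposing the filtration condition $\varphi(\Fil) \subseteq \Fil_0^{\oplus 3}$ amounts to requiring that for generators $h_1, h_2, h_3$ of $\Fil$ (chosen to lift the canonical basis of the kernel of Frobenius modulo $p$), the $e_j'$-coefficient of $\varphi(h_i)$ equals $p\alpha$ times its $f_j'$-coefficient. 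The $e_1'$-coefficient relation for $\varphi(h_3)$, combined with $r \geq 1$, forces $\sigma^{-1}(y) \cdot A \equiv 0 \pmod{p}$; hence either $\overline{y} = 0$ (done), or $\overline{A} = 0$.

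In the remaining case $\overline{A} = 0$, write $A = pA'$ with $A' \in W(\mathbb{F}_{p^2})$, and reduce the relevant filtration equations for $\varphi(h_2)$ and $\varphi(h_3)$ modulo $p$. One may then solve explicitly for $\overline{B}$ and $\overline{C_4}$ as $\overline{\mathbb{F}}_p$-linear combinations of $\overline{\alpha}$, the reductions of the filtration parameters, and the $\mathbb{F}_{p^2}$-elements $\overline{A'}, \overline{C_3}$ (together with their $\sigma$-twists). Since $\overline{x} = \sigma^2(\overline{B}) - \overline{B}$ and $\overline{y} = \sigma^2(\overline{C_4}) - \overline{C_4}$, applying $\sigma^2 - \mathrm{id}$ cancels the $\sigma^2$-fixed contributions and shows that $\overline{x}$ and $\overline{y}$ both lie in a common one-dimensional $\mathbb{F}_{p^2}$-subspace of $\overline{\mathbb{F}}_p$, spanned by a single element depending on the common filtration parameter $\overline{\alpha}$; this is precisely the asserted $\mathbb{F}_{p^2}$-linear dependence.

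The main obstacle I anticipate is the careful case analysis on the valuations $r, s, t$ from Proposition~\ref{prop:explicit-isogeny}, together with the bookkeeping needed to track which entries of $\varphi$ become $\sigma^2$-fixed after reduction modulo $p$. The auxiliary parameters appearing in the lifts $h_i$ of the kernel-of-Frobenius generators introduce free variables; these must be absorbed by choosing the adapted basis from Proposition~\ref{prop:explicit-isogeny} compatibly with the common filtration $\Fil_0$, which is what ultimately makes these free variables drop out and leaves only $\mathbb{F}_{p^2}$-fixed quantities controlling the final relation between $\overline{x}$ and $\overline{y}$.
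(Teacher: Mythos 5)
Your setup matches the paper's: pass to filtered Dieudonn\'e modules via Proposition~\ref{reduction-to-d-modules}, adapt a basis via Proposition~\ref{prop:explicit-isogeny}, exploit that all three target filtrations share one parameter, write out Frobenius and filtration compatibilities, and deduce that $p$ divides $\alpha$ (your $A$) unless $\overline{y}=0$ --- all of this parallels the paper. The gap is in the final step. You claim $\overline{x}=\sigma^2(\overline{B})-\overline{B}$ and $\overline{y}=\sigma^2(\overline{C_4})-\overline{C_4}$ and propose to finish by reducing the filtration equations modulo $p$. But $\sigma^2(B)-B=p^{r-1}x$ gives that identity only when $r=1$, and in the only nontrivial case ($x\in W^\times$) one necessarily has $r\geq 2$: once $p\mid \alpha$, the filtration equation for the second generator forces $p\mid\beta$ (your $B$), and then $\sigma^2(\beta)-\beta=p^{r-1}x$ with $x$ a unit forces $r\geq 2$. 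So modulo $p$ one only sees $\overline{B}\in\mathbb{F}_{p^2}$, which carries no information about $\overline{x}$; likewise nothing guarantees $s=1$, so the same problem afflicts $\overline{y}$. The data of $x$ and $y$ sit at $p$-adic depth $r-1$ and $s-1$ inside $\beta$ and $T$, which is exactly the depth at which the free lift parameters $b,g$ of the filtration generators enter (as $p^{r-1}b$ and $p^{s}g$), so no reduction modulo a fixed small power of $p$ can isolate them without substantially more work.

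This missing mechanism is the core of the paper's proof. After writing $a=p^m a_1$, $\beta_0=p^n\beta_1$, $T=p^k T_1$ and the quadratic equations $(\star')$, $(\star\star')$ satisfied by the common parameter $a_1$, the paper proves a lemma that $a_1-\sigma^{-1}(\beta_1)/u_1$ lies in $W(\mathbb{F}_{p^2})$ modulo $p^{r-1-n}$ and $a_1-\sigma^{-1}(T_1)/u_1'$ lies in $W(\mathbb{F}_{p^2})$ modulo $p^{s-k}$; this requires expanding the relevant root of the quadratic in a power series and using the decomposition $\beta_1=w_1+p^{r-2-n}\chi$ with $w_1\in W(\mathbb{F}_{p^2})$. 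Only after subtracting these two congruences to eliminate the common $a_1$, and then applying $\sigma^2-1$ at precision $p^{s-k}$, does one obtain $p\mid\bigl(\sigma(u_1')p^{(r-2-n)-(s-k-1)}x-\sigma(u_1)y\bigr)$, whence the $\mathbb{F}_{p^2}$-linear dependence of $\overline{x},\overline{y}$. Your plan has no mechanism for reaching this $p$-adic depth or for controlling the quadratic (not linear) dependence of $\beta_1,T_1$ on $a_1$, and the identity it rests on is false when $r\geq 2$; as written, the argument does not go through without essentially reconstructing the paper's valuation bookkeeping and root-expansion lemma.
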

\begin{proof}
    
    Let $\Fil \subset \mathbb{D}$ denote the filtration on $\mathbb{D}$ corresponding to the lift $\mathcal{A}[p^\infty]$ of $A_0[p^\infty]$ and let $(\mathbb{D}',\Fil',F',V')$ be the filtered Dieudonn\'e module corresponding to $\mathcal{E}^3[p^\infty].$ We choose a $W$-basis $\{e_1',f_1'.e_2',f_2',e_3',f_3'\}$ of $\mathbb{D}'$ such that $\Fil' = \langle f_1'+pae_1', f_2'+pae_2', f_3'+pae_3' \rangle$ for some $a \in W$ and such that $F'e_i' = f_i'$ and $F'f_i' = pe_i'$ for $i\in \{1,2,3\}.$
    
    We denote by $\varphi : (\mathbb{D},\Fil,F,V) \rightarrow (\mathbb{D}',\Fil',F',V')$ the isogeny of filtered Dieudonn\'e modules corresponding to the isogeny $\mathcal{A}\rightarrow \mathcal{E}^3.$
    By Proposition~\ref{prop:explicit-isogeny}, we may find a basis $\{e_1,f_1,e_2,f_2,e_3,f_3\}$ of $\mathbb{D}$ with respect to which \[F = \begin{pmatrix}
0 & p &   &   &   &  \\
1 & 0 &   & x &   & z  \\
  &   & 0 & p &   &   \\
  &   & 1 & 0 &   & y \\
  &   &   &   & 0 & p \\
  &   &   &   & 1 & 0\\
\end{pmatrix}\] for some $x,y , z \in W$ and furthermore such that the filtration $\Fil \subset \mathbb{D}$ and the isogeny $\varphi$ have the following shape: 
\begin{itemize}
    \item $\Fil = \langle f_1+pa'e_1,f_2-\sigma^{-1}(x)e_1+pbe_1+pce_2, f_3-\sigma^{-1}(y)e_2-\sigma^{-1}(z)e_1+pde_1+pge_2+phe_3 \rangle$, for some $a',b,c,d,g,h \in W$,  
    \item    \begin{align*} \varphi : \mathbb{D} &\rightarrow \mathbb{D}' \\
                    e_1 &\mapsto p^r e_1'\\
                    f_1 &\mapsto p^r f_1'\\
                    e_2 &\mapsto \alpha e_1' + \beta f_1' + p^s e_2'\\
                    f_2 &\mapsto \sigma(\alpha)f_1' + p\sigma(\beta)e_1' + p^s f_2' \\
                    e_3 &\mapsto Qe_1' + Rf_1' +Se_2' + Tf_2' + p^t e_3'\\
                    f_3 &\mapsto \sigma(Q)f_1' + p\sigma(R) e_1' + \sigma(S)f_2' + p\sigma(T)e_2' + p^t f_3'
    \end{align*} for integers $r, s, t \geq 0$ and where $\alpha, \beta, Q, R, S, T \in W.$
\end{itemize}

We note that $\varphi(f_1+pa'e_1) \in \Fil'$ if and only if $a' = a$ and thus henceforth we assume that $a' = a$. It is a straightforward calculation that $\varphi$ defines an isogeny of filtered Dieudonn\'e modules if and only if the following equations hold: 

\begin{align}
    \sigma^2(\alpha) &= \alpha \tag{1.i}\\
    \sigma^2(\beta)-\beta &= p^{r-1}x \tag{1.ii}\\
    a[\sigma(\alpha)+pc\beta] &= [\sigma^{-1}(\beta)+p^rb + c\alpha] \tag{2.i}\\
    a &= c \tag{2.ii}\\
    \sigma^2(Q) - Q &= y\sigma(\beta) \tag{3.i} \\
    p[\sigma^2(R)-R] &= y\sigma(\alpha) +p^rz \tag{3.ii}\\
    \sigma^2(S) &= S \tag{3.iii} \\
    \sigma^{2}(T)-T &= p^{s-1}y \tag{3.iv}\\
    a &= h \tag{4.i}\\
    a[\sigma(S)+paT] &= \sigma^{-1}(T)+p^sg+aS \tag{4.ii}\\
    a[\sigma^{-1}(Q)+pg\beta+paR] &= [\sigma^{-1}(R)+g\alpha + p^rd+aQ]. \tag{4.iii}
\end{align}

We may assume that $x , y \in W^\times$ since otherwise the conclusion of the Proposition evidently holds. 
From (1.ii), we see that $r \geq 1$.
Then (3.ii) implies that $p$ divides $\alpha$ and thus from (2.ii) we see that $p$ divides $\beta$ as well. Then (1.ii) implies that $r \geq 2$. Let $\alpha = p\alpha_0$ and $\beta = p\beta_0.$ The above equations can be rewritten to give the following equations:
\begin{align}
    \sigma^2(\alpha_0) &= \alpha_0 \tag{1.i'}\\
    \sigma^2(\beta_0) -\beta_0 &= p^{r-2}x \tag{1.ii'}\\
    a[\sigma(\alpha_0)+pa\beta_0] &= \sigma^{-1}(\beta_0) +p^{r-1}b + a\alpha_0 \tag{2.i'}\\
    \sigma^2(Q) - Q &= py\sigma(\beta_0) \tag{3.i'} \\
    \sigma^2(R)-R &= y\sigma(\alpha_0) +p^{r-1}z \tag{3.ii'}\\
    \sigma^2(S) &= S \tag{3.iii'} \\
    \sigma^{2}(T)-T &= p^{s-1}y \tag{3.iv'}\\
    a[\sigma(S)+paT] &= \sigma^{-1}(T)+p^sg+aS \tag{4.ii'}\\
    a[\sigma^{-1}(Q)+p^2g\beta_0+paR] &= [\sigma^{-1}(R)+pg\alpha_0 + p^rd+aQ]. \tag{4.iii'}
\end{align}
  
  The equation (2.i') implies that $v(\beta_0) = v(\sigma(\alpha_0)-\alpha_0) + v(a)$ and similarly (4.ii') and (4.iii') imply $v(T) = v(\sigma(S)-S)+v(a)$ and $v(R) = v(\sigma^{-1}(Q)-Q)+v(a).$ Suppose that $m \defeq v(a)$, $k \defeq v(T)$ and $n \defeq v(\beta_0)$. Then $v(\sigma(\alpha_0)-\alpha_0) = n-m$ and $v(\sigma(S)-S) = k-m.$  We note that $r-2 \geq n \geq 2m$.
  We write $a = p^ma_1$, $\beta_0 = p^n \beta_1$ and $T = p^k T_1$, with $a_1, \beta_1, T_1 \in W^\times.$
  
  We rewrite (2.i') and (4.ii') to get \begin{align}
      a_1\left[\frac{\sigma(\alpha_0)-\alpha_0}{p^{n-m}} + p^{2m+1}a_1\beta_1\right] &= \sigma^{-1}(\beta_1)+p^{r-1-n}b, \tag{$\star$}\\
      a_1\left[\frac{\sigma(S)-S}{p^{k-m}}+p^{2m+1}a_1T_1\right] &= \sigma^{-1}(T_1) + p^{s-k}g. \tag{$\star\star$}
  \end{align}
  
  Let \begin{align*}
      u_1 &\defeq \frac{\sigma(\alpha_0)-\alpha_0}{p^{n-m}} \in W(\mb{F}_{p^2})^\times \\
      u_1' &\defeq \frac{\sigma(S)-S}{p^{k-m}} \in W(\mb{F}_{p^2})^\times.
  \end{align*}
  Rewriting the equations $(\star)$ and $(\star\star)$ we get that \begin{align}
      a_1^2(p^{2m+1}\beta_1) + a_1u_1 - \sigma^{-1}(\beta_1) &= p^{r-1-n}b \tag{$\star'$}\\
      a_1^2(p^{2m+1}T_1)+a_1u_1'-\sigma^{-1}(T_1) &= p^{s-k}g \tag{$\star\star'$}
  \end{align} 
  
  \begin{lem}\hfill
  \begin{enumerate}
      \item $\left( a_1 - \frac{\sigma^{-1}(\beta_1)}{u_1}\right) \pmod{p^{r-1-n}} \in W(\mb{F}_{p^2})/\langle p^{r-1-n}\rangle$ 
      \item $\left( a_1 - \frac{\sigma^{-1}(T_1)}{u_1'}\right) \pmod{p^{s-k}} \in W(\mb{F}_{p^2})/\langle p^{s-k}\rangle$
  \end{enumerate}
  \end{lem}
  \begin{proof}
      Let $\Delta_1, \Delta_2$ be the two roots of the equation $X^2(p^{2m+1}\beta_1)+X(u_1)-\sigma^{-1}(\beta_1) = 0$ with $v(\Delta_1) = 0$ and $v(\Delta_2) = -(2m+1).$ 
      From $(\star')$, we see that $p^{r-1-n}$ divides $p^{2m+1}\beta_1(a_1-\Delta_1)(a_1-\Delta_2)$. Since $\beta_1 p^{2m+1}(a_1-\Delta_2) \in W^\times$ this implies that $p^{r-1-n}$ divides $(a_1-\Delta_1).$
      
      So it remains to prove that \[\left(\Delta_1 - \frac{\sigma^{-1}(\beta_1)}{u_1}\right) \pmod{p^{r-1-n}}\in W(\mb{F}_{p^2})/\langle p^{r-1-n} \rangle. \]
      
      Pick a $\chi \in W^\times$ such that $\sigma^2(\chi) -\chi = x.$ Then we may write $\beta_1 = w_1 + p^{r-2-n}\chi$ with $w_1 \in W(\mb{F}_{p^2}).$ We see that \begin{align*}
          &\Delta_1 = \frac{\sigma^{-1}(\beta_1)}{u_1} + \sum_{j \geq 2} \binom{1/2}{j} 2^{2j-1}p^{(j-1)(2m+1)}\beta_1^{j-1} \sigma^{-1}(\beta_1)^j u_1^{-(2j-1)} \\
          &= \frac{\sigma^{-1}(\beta_1)}{u_1} + \sum_{j \geq 2} \binom{1/2}{j} 2^{2j-1}p^{(j-1)(2m+1)}(w_1+p^{r-2-n}\chi)^{j-1} \sigma^{-1}(w_1+p^{r-2-n}\chi)^j u_1^{-(2j-1)}\\
          &\equiv \frac{\sigma^{-1}(\beta_1)}{u_1} + \sum_{j \geq 2} \binom{1/2}{j} 2^{2j-1}p^{(j-1)(2m+1)}w_1^{j-1} \sigma^{-1}(w_1)^j u_1^{-(2j-1)} \pmod{p^{r-1-n}}
      \end{align*} from which (1) follows since the final sum above is indeed in $W(\mb{F}_{p^2}).$
      
      The proof of (2) is similar to that of (1). 
  \end{proof}
  
  By switching the roles of $\beta_1$ with $T_1$ and $u_1$ with $u_1'$ if need be, in the argument that follows, we may assume that $s-k \leq r-1-n.$ Then from the above Lemma we see that 
  \[\Lambda \defeq \left( \frac{\sigma^{-1}(\beta_1)}{u_1} - \frac{\sigma^{-1}(T_1)}{u_1'}\right) \pmod{p^{s-k}} \in W(\mb{F}_{p^2})/\langle p^{s-k}\rangle.\] In other words, $p^{s-k}$ divides $\sigma^{2}(\Lambda) - \Lambda$
  
  \begin{align*}
      p^{s-k} | \left(\sigma^{2}(\Lambda) - \Lambda \right) &= \left( \frac{\sigma(\beta_1)-\sigma^{-1}(\beta_1)}{u_1}\right) - \left( \frac{\sigma(T_1)-\sigma^{-1}(T_1)}{u_1'}\right)\\
      &= \frac{p^{r-2-n}\sigma^{-1}(x)}{u_1} - \frac{p^{s-k-1}\sigma^{-1}(y)}{u_1'}.
  \end{align*}
  
  This implies that $p$ divides $\left(\sigma(u_1')p^{(r-2-n)-(s-k-1)}x - \sigma(u_1)y\right)$ and therefore that the reductions $\overline{x}, \overline{y} \in \overline{\mb{F}}_p$ are linearly dependent over $\mb{F}_{p^2}.$
\end{proof}

\begin{thm}\label{thm:part3}
There exist infinitely many supersingular abelian threefolds $A_0$ over $\overline{\mb{F}}_p$ such that there does not exist an abelian scheme $\mathcal{A}$ over $\Spec(W)$ lifting $A_0$ and such that $\mathcal{A}$ is isogenous over $W$ to the cube of an elliptic curve. 
\end{thm}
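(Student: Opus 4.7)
The plan is to combine Propositions~\ref{prop:lin-dependence} and~\ref{prop:constructible} via a dimension count in the parameter space $(\overline{\mb{F}}_p)^3$. By Corollary~\ref{cor:explicit-frob-ver-3d}, every supersingular abelian threefold $A_0$ has Dieudonn\'e module of the form $\mathbb{D}(x,y,z)$ for some $(x,y,z) \in W^3$. If $A_0$ admits a lift isogenous to $\mathcal{E}^3$, then Proposition~\ref{prop:lin-dependence} furnishes a second presentation $\mathbb{D}(x',y',z') \cong \mathbb{D}(x,y,z)$ in which $\overline{x}'$ and $\overline{y}'$ are $\mb{F}_{p^2}$-linearly dependent, and Proposition~\ref{prop:constructible} places the reductions $(\overline{x}, \overline{y}, \overline{z}, \overline{x}', \overline{y}', \overline{z}')$ in the constructible set $Z$ with quasi-finite projections $\pi_1, \pi_2$. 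Thus the triples $(\overline{x}, \overline{y}, \overline{z})$ arising from threefolds that do admit such a lift are contained in $B := \pi_1(\pi_2^{-1}(L))$, where $L \subseteq (\overline{\mb{F}}_p)^3$ is the Zariski-closed locus on which the first two coordinates are $\mb{F}_{p^2}$-linearly dependent.

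The locus $L$ is the union of the finitely many planes $a \overline{x}' + b \overline{y}' = 0$ indexed by $[a:b] \in \mb{P}^1(\mb{F}_{p^2})$, so $\dim L = 2$. Since both projections are quasi-finite, we obtain $\dim B \leq \dim \pi_2^{-1}(L) \leq \dim L = 2$. Therefore the complement $U := (\overline{\mb{F}}_p)^3 \setminus B$ is the $\overline{\mb{F}}_p$-point set of a nonempty Zariski open subset of $\overline{\mb{F}}_p^3$ and hence is infinite. Moreover, Proposition~\ref{prop:constructible} implies that the equivalence relation $\mathbb{D}(x,y,z) \cong \mathbb{D}(x',y',z')$ groups reductions into orbits of size at most $p^{26}$, so $U$ meets infinitely many orbits. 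Each such orbit corresponds to an isomorphism class of supersingular Dieudonn\'e module $\mathbb{D}(x,y,z)$ realized by a supersingular abelian threefold $A_0$ (via the explicit parametrization of Section~\ref{sec:lifitng-threefolds}), and by the above none of these admits a lift isogenous to the cube of an elliptic curve.

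The main technical obstacle is making the inequality $\dim \pi_i^{-1}(L) \leq \dim L$ rigorous given that $Z$ is constructible rather than closed; this requires stratifying $Z$ into locally closed pieces and applying the standard fiber-dimension estimates on each stratum. A secondary point is verifying that every isomorphism class of Dieudonn\'e module $\mathbb{D}(x,y,z)$ with $(\overline{x}, \overline{y}, \overline{z}) \in U$ does arise as $A_0[p^\infty]$ for some supersingular abelian threefold over $\overline{\mb{F}}_p$; this follows from the algebraizability of polarized supersingular $p$-divisible groups of the relevant type, together with the structure theory of the supersingular locus of $\mathcal{A}_3$.
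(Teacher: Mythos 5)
Your proposal is correct and follows essentially the same route as the paper: combine Proposition~\ref{prop:lin-dependence} with the quasi-finiteness of the projections from the constructible set $Z$ of Proposition~\ref{prop:constructible} to show that the parameters $(\overline{x},\overline{y},\overline{z})$ of threefolds admitting such a lift lie in a constructible subset of dimension at most $2$ (the paper phrases the linear-dependence locus as the curve $a^{p^2-1}=b^{p^2-1}$ times a line rather than a union of planes, and swaps the roles of $\pi_1,\pi_2$, but this is immaterial). Your extra remarks---using the $p^{26}$ fiber bound to get infinitely many isomorphism classes, and flagging that every $\mathbb{D}(x,y,z)$ is realized by a supersingular threefold---only make explicit steps the paper also relies on.
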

\begin{proof}
Let $Z\subset (\overline{\mb{F}}_p)^3\times (\overline{\mb{F}}_p)^3$ be the constructible set given by Proposition~\ref{prop:constructible}, and  $\pi_1, \pi_2$ the projections  as in the statement of Proposition \ref{prop:constructible}.

Let us denote by $C \subset \overline{\mb{F}}_p \times \overline{\mb{F}}_p$ the plane curve  $C := \{ (a, b) \in \overline{\mb{F}}_p \times \overline{\mb{F}}_p : a^{p^2-1} = b^{p^2-1}\}.$
Let $A_0$ be a supersingular abelian threefold such that the Dieudonn\'e module of $A_0[p^\infty]$ is isomorphic to $\mathbb{D}(x',y',z')$ for some $x', y', z'$ with $x', y' \in W^\times.$

By Proposition \ref{prop:lin-dependence}, if $A_0$ admits a lift to $W$ that is isogenous to a cube of an elliptic curve, then there exist $x,y ,z \in W,$ such that $\mathbb{D}(x',y',z') \cong \mathbb{D}(x,y,z)$ and such that $\overline{x}, \overline{y}$ are linearly dependent over $\mb{F}_{p^2},$ i.e. $(\overline{x},\overline{y}) \in C.$

This in particular implies that $(x',y',z') \in \pi_2\left(\pi_1^{-1}\left(C \times \overline{\mb{F}}_p\right)\right).$ Since $C$ is one-dimensional and since the maps $\pi_1, \pi_2 : Z\rightarrow \left(\overline{\mb{F}}_p \right)^3$ have finite fibers, the subset $\pi_2\left(\pi_1^{-1}\left(C \times \overline{\mb{F}}_p\right)\right)$  of $\left(\overline{\mb{F}}_p \right)^3$ has dimension at most 2. 
This completes the proof of the theorem once we note that for every $(x',y',z') \in W^3$, there is a supersingular abelian threefold over $\overline{\mb{F}}_p$ such that the Dieudonn\'e module associated to its $p$-divisible group is isomorphic to $\mathbb{D}(x',y',z')$.
\end{proof}

\printbibliography[]

\end{document}